\documentclass[letterpaper, 11pt]{amsart}

\usepackage{amsmath,amsthm,amsfonts,amssymb,amscd}
\usepackage{bbm}
\usepackage{bm}
\usepackage{tikz}
\usepackage{tikz-cd}
\usepackage{appendix}
\usepackage{BOONDOX-calo}
\usepackage{standalone}
\usetikzlibrary{arrows,chains,matrix,positioning,scopes, cd}
\usepackage[letterpaper, left=3cm,right=3cm, top=3cm, bottom=3cm]{geometry}
\usepackage{adjustbox}
\usepackage{enumitem}
\usepackage[all]{xy}
\usepackage[
colorlinks=true, citecolor=blue, linkcolor=blue, urlcolor=red]{hyperref}
\usepackage{multicol}
\allowdisplaybreaks
\usepackage{mathrsfs}
\usepackage{here}
\usepackage{blkarray}
\usepackage{caption} 
\usepackage{footnote}
\usetikzlibrary{patterns}
\usepackage{accents}
\usepackage{fvextra, fancyvrb}

\newcommand{\R}{{\mathbb R}}

\newcommand{\Z}{{\mathbb Z}}
\newcommand{\Q}{{\mathbb Q}}

\newcommand{\bE}{{\mathbf E}}

\newcommand\dual{\raise0.9ex\hbox{$\scriptscriptstyle\vee$}}

\newcommand{\mathsym}[1]{{}}
\newcommand{\unicode}[1]{{}}

\newcommand{\dR}{\mathrm{dR}}

\newcommand{\wX}{{\widetilde{X}}}
\newcommand{\wA}{{\widetilde{A}}}
\newcommand{\wB}{{\widetilde{B}}}

\newcommand{\D}{{\Delta}}

\newcommand{\dd}{{\textup{d}}}
\newcommand{\Jh}{{\widehat{J}}}

\theoremstyle{plain}
\newtheorem{thm}{Theorem} 
\newtheorem{prop}[thm]{Proposition}

\newtheorem{cor}[thm]{Corollary}
\numberwithin{thm}{section}
\numberwithin{equation}{section}

\newtheorem{manualtheoreminner}{Theorem}
\newenvironment{thm'}[1]{%
  \renewcommand\themanualtheoreminner{#1}%
  \manualtheoreminner
}{\endmanualtheoreminner}

\theoremstyle{definition}

\theoremstyle{remark}
\newtheorem{rem}[thm]{Remark}

\numberwithin{equation}{section}

\tikzset{>=stealth}

\makeatletter
\def\@seccntformat#1{%
  \protect\textup{\protect\@secnumfont
    \ifnum\pdfstrcmp{subsection}{#1}=0 \bfseries\fi
    \csname the#1\endcsname
    \protect\@secnumpunct
  }%
}  
\makeatother

\makeatletter
\@namedef{subjclassname@2020}{%
  $2020$ Mathematics Subject Classification}
\makeatother

\begin{document}

\title[On two families of period integrals related to Catalan's constant]{On two families of period integrals related to Catalan's constant}
\author{Payman Eskandari}
\address{Department of Mathematics and Statistics, University of Winnipeg, Winnipeg MB, Canada }
\email{p.eskandari@uwinnipeg.ca}
\subjclass[2020]{Primary: 11J04, 33C20, 11B37; Secondary: 11M06, 14F40}
\begin{abstract}
The construction of a mixed motive associated with Catalan's constant $G=\sum_{n=0}^\infty (-1)^n/(2n+1)^2$ in \cite{EMN} led to a supply of period integrals that evaluate to linear forms in 1 and $G$. Inside this supply, the boundary of the integration domain leads to a natural 3-parameter family of period integrals. We show that this family is closely related to the family of hypergeometric period integrals from \cite{RZ} and \cite{Ne}.
\end{abstract}
\maketitle
\vspace*{-.2in}
\section{Introduction}
Catalan's constant
\[
G:=1-\frac{1}{3^2}+\frac{1}{5^2}-\frac{1}{7^2}+\frac{1}{9^2}-\cdots
\]
is one of the most classical constants the irrationality of which is expected but seems to remain open. There are two constructions of period integrals that lead to linear forms in 1 and $G$, one originally due to Rivoal and Zudilin \cite{RZ} which uses values of hypergeometric functions, and another construction coming from \cite{EMN} which has motivic origins. The purpose of this paper is to show that the two constructions are closely related.
\medskip\par 
We start with a brief review of the second construction. Let
\begin{equation}\label{eq1}
    \D = \{(x,y)\in\R^2: x,y,1-x-y\geq 0\}.
\end{equation}
By Theorem 7.2.4 of \cite{EMN}, for every \emph{symmetric} polynomial $F\in \Z[x^2,y^2]$, one has
\[
\int\limits_\D \frac{F}{(1-x^2-y^2)^{k+1}} \,\dd x\dd y \in \Q+\Q G
\]
for any $k\in\Z$ provided that the integrand differential form is ``integrable" (a condition that guarantees convergence of the integral, see \cite[\S 7.2]{EMN}). For applications to rational approximations, one would like to consider a family of integrand functions whose integrals converge to zero rapidly. This naturally leads us to consider integrand functions that vanish on the boundary of $\D$. Taking into account symmetry requirements, this brings us to integrals of the form
\begin{equation}\label{eq2}
    \int\limits_\D \frac{f^\ell h^m}{g^{k+1}} \dd x\dd y,
\end{equation}
where here and throughout,
\begin{equation}\label{eq3}
    f=x^2y^2 ,\quad\quad g=1-x^2-y^2, \quad\quad h=\prod_{\delta,\delta'\in\{\pm 1\}} (1-\delta x -\delta' y).
\end{equation}
The integral \eqref{eq2} converges if and only if 
\begin{equation}\label{eq: original I family convergence cond}
    \ell,m\geq 0, \quad  2\ell+2m\geq k,
\end{equation}
in which case the integrability condition of \cite{EMN} holds and the integral will thus be in $\Q+\Q G$.
\medskip\par 
The other construction uses generalized hypergeometric functions. Traces of the link between values of the generalized hypergeometric function ${}_{3}F_2$ at 1 and $G$ can already be found in work of Nielsen \cite[p. 166]{Nielsen} and Ramanujan \cite[p. 289]{Ram}. In its more mature form, Rivoal and Zudilin \cite[\S 9]{RZ} give a 5-parameter family of ${}_3F_2$-values at 1 which give linear forms in 1 and $G$. This family, which has been further studied more recently by Nesterenko \cite{Ne}, is given by
\begin{equation}\label{eq6}
    \int\limits_{[0,1]^2} \frac{x^{a_1-1/2}y^{a_2}(1-x)^{b_1}(1-y)^{b_2-1/2}}{(1-xy)^{a_3+1}} \dd x \dd y \ \in \ \Q+\Q G,
\end{equation}
where the $a_i, b_j$ are integers satisfying the convergence conditions
\[
a_1,a_2,b_1,b_2\geq 0, \qquad b_1+b_2\geq a_3.
\]

Reflecting the original ideas leading to the two families \eqref{eq2} and \eqref{eq6}, let us refer to the two families respectively as motivic and hypergeometric. Note that this is not to say that the hypergeometric family has no motivic interpretation. The terms are only meant to reflect the origins of the two families. In fact, a change of variables $x=u^2$ and $1-y=v^2$ would rationalize the integrand of \eqref{eq6} while not changing the integration domain, hence transforming the integral into a Kontsevich-Zagier period.
\medskip\par 
We shall show that perhaps surprisingly, the motivic family \eqref{eq2} is intimately related to the 3-parameter hypergeometric subfamily where $a_1=a_2$ and $b_1=b_2$, i.e., the 3-parameter family
\begin{equation}\label{eq4}
\int\limits_{[0,1]^2} \frac{x^{m-1/2}y^{m}(1-x)^{n}(1-y)^{n-1/2}}{(1-xy)^{p+1}} \dd x \dd y \, \quad\quad(m,n,p\in\Z; \ m,n\geq 0; \ 2n\geq p ).
\end{equation}

Our main result shows that on two special 2-parameter subfamilies, the motivic integrals and the hypergeometric integrals coincide up to explicit Gamma-factors. More explicitly, we will prove the following:

\begin{thm}\label{thm1}
\begin{enumerate}[label=(\alph*)]
\item Let $m,n$ be integers satisfying $0\leq m\leq 2n$. Then
\begin{equation}\label{eq1thm}
\int\limits_\Delta\frac{f^nh^m}{g^{2m+1}}\,\dd x\,\dd y
={} 2^{-2n-3}
\frac{\Gamma(n+\frac12)^2}
{\Gamma(2n-m+\frac12)\Gamma(m+\frac12)}
\int\limits_{[0,1]^2}
\frac{x^{m-\frac12}y^m(1-x)^n(1-y)^{n-\frac12}}
{(1-xy)^{m+1}}\,\dd x\,\dd y.
\end{equation}
\item Let $m,n$ be integers satisfying $0\leq m\leq 2n+1$. Then
\begin{equation}\label{eq2thm}
\int\limits_\Delta\frac{f^nh^m}{g^{2m}}\,\dd x\,\dd y
={}2^{-2n-3}
\frac{\Gamma(n+\frac12)\Gamma(n+\frac32)}
{\Gamma(2n-m+\frac32)\Gamma(m+\frac12)}
\int\limits_{[0,1]^2}
\frac{x^{m-\frac12}y^m(1-x)^n(1-y)^{n-\frac12}}
{(1-xy)^m}\,\dd x\,\dd y.
\end{equation}
\end{enumerate}
\end{thm}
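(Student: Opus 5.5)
The approach is to write both sides of \eqref{eq1thm} and \eqref{eq2thm} as explicit (generalized) hypergeometric series at a fixed argument, and then connect them by a classical transformation. The Gamma quotients in front are what such a transformation should produce.

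On the motivic side, I would first symmetrize the domain using $u=x+y$ and $v=x-y$. This gives
\[
h=(1-u^2)(1-v^2),\qquad g=1-\tfrac12(u^2+v^2),\qquad f=\tfrac1{16}(u^2-v^2)^2 ,
\]
and $\D$ becomes $\{0\le u\le 1,\ |v|\le u\}$ with $\dd x\,\dd y=\tfrac12\dd u\,\dd v$. The integrand is even in $v$, so I would pass to $U=u^2$ and $V=v^2$ on the triangle $0\le V\le U\le 1$. In case (a) the integral becomes
\[
\frac{2^{-4n}}{4}\int\limits_{0\le V\le U\le 1}\frac{(U-V)^{2n}(1-U)^m(1-V)^m}{(1-\frac{U+V}{2})^{2m+1}}\,\frac{\dd U\,\dd V}{\sqrt{UV}} .
\]
Next I would put $V=Ut$ with $t\in[0,1]$ and write $1-\frac{U+V}{2}=\frac12\bigl((1-U)+(1-V)\bigr)$. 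This makes the denominator homogeneous in the two quantities $1-U$ and $1-V$. Substituting $1-V=(1-U)s$, or equivalently working with the ratio $r=(1-U)/(1-V)$, separates the $(1-\cdot)$ factors. The motivic integral then becomes a double integral of Euler type with a kernel of the form $(1+r)^{-2m-1}$. Expanding this kernel and carrying out the remaining Beta integrals should give a terminating-free ${}_3F_2$ at argument $1$, or possibly $-1$, with half-integer parameters built from $m$ and $n$. Case (b) is handled the same way, with exponent $2m$ in place of $2m+1$.

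On the hypergeometric side, \eqref{eq4} with $p=m$ (resp.\ $p=m-1$) is handled by the standard Euler integral representation. Expanding $(1-xy)^{-p-1}=\sum_k\binom{p+k}{k}x^ky^k$ and integrating termwise gives
\[
\frac{\Gamma(m+\frac12)\Gamma(n+1)\Gamma(m+1)\Gamma(n+\frac12)}{\Gamma(m+n+\frac32)\Gamma(m+n+\frac32)}\;{}_3F_2\!\left(\begin{matrix}p+1,\ m+\frac12,\ m+1\\ m+n+\frac32,\ m+n+\frac32\end{matrix};1\right),
\]
up to the obvious adjustments. Note that the two Beta factors have the same lower parameter $m+n+\frac32$. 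This fits the symmetric specialization $a_1=a_2$, $b_1=b_2$ and strongly suggests a well-poised or quadratic structure.

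The final and main step is to identify the two series. I expect the motivic series to come out nearly-poised, or at argument $-1$, and the bridge to be a quadratic transformation of Whipple/Bailey type relating a well-poised ${}_3F_2(-1)$ or a ${}_2F_1$ at a quadratic argument to a ${}_3F_2(1)$, possibly composed with a Thomae relation. The factor $\Gamma(2n-m+\frac12)^{-1}$, together with the constraint $m\le 2n$, is the signature of such a formula. I would first try Whipple's transformation taking a well-poised ${}_3F_2(-1)$ to a Saalschützian series. If that fails, I would fall back on comparing coefficients after expressing both sides as single sums over $k$ and verifying the resulting identity by Zeilberger's algorithm. In that fallback, both sides satisfy the same recurrence in $m$ (for fixed $n$), and it remains to check initial values, for instance $m=0$, where both integrals reduce to products of Beta values. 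The bookkeeping of the powers $2^{-2n-3}$ is routine once the transformation is in place.
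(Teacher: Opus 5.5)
\textbf{There is a genuine gap.} Your change of variables and your ${}_3F_2(1)$ expansion of the right-hand side are correct. But the proposal never exhibits a transformation taking one side to the other with the stated Gamma factor, and the steps you sketch toward it do not work as described.

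\textbf{The left side is not a single ${}_3F_2$.} Carrying out your substitutions $V=Ut$, $r=(1-U)/(1-V)$, i.e.\ $U=(1-r)/(1-rt)$, $V=t(1-r)/(1-rt)$, the left side of \eqref{eq1thm} becomes
\[
2^{2m-4n-1}\int_0^1\!\!\int_0^1\frac{r^m(1-r)^{2n}\,t^{-1/2}(1-t)^{2n}}{(1+r)^{2m+1}\,(1-rt)^{2n+1}}\,\dd r\,\dd t .
\]
For $m=n=0$ this gives $G$. Besides your kernel $(1+r)^{-2m-1}$ there is a second kernel $(1-rt)^{-2n-1}$ coupling $r$ and $t$, which your sketch overlooks.
\begin{itemize}
\item Expanding $(1+r)^{-2m-1}$ leaves integrals that are non-terminating ${}_3F_2(1)$'s, not Beta values.
\item Expanding $(1-rt)^{-2n-1}$ instead leaves $\int_0^1 r^{m+k}(1-r)^{2n}(1+r)^{-2m-1}\,\dd r$, which is a ${}_2F_1(-1)$.
\end{itemize}
Either way you get a double series, so there is as yet nothing to feed into Whipple's transformation or into Zeilberger's algorithm.

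Worse, the $t$-integral equals $B(\frac12,2n+1)\,{}_2F_1(\frac12,2n+1;2n+\frac32;r)$, which behaves like $\log\frac{1}{1-r}$ as $r\to1$. Hence expanding $(1+r)^{-2m-1}$ in powers of $r$ and integrating termwise gives an alternating series whose $j$-th term has size $\asymp j^{2m-2n-1}\log j$. This diverges for $n<m\le 2n$.

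\textbf{The fallback's base case is wrong for part (a).} There the $m=0$ values are not products of Beta values. The two integrals in \eqref{eq1thm} become $\int_\D f^n g^{-1}\,\dd x\,\dd y$ and $\int_{[0,1]^2}x^{-1/2}(1-x)^n(1-y)^{n-1/2}(1-xy)^{-1}\,\dd x\,\dd y$. These are genuine linear forms in $1$ and $G$, equal to $G$ and $8G$ for $n=0$. Only part (b) has Beta-value initial data.

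For fixed $n$ the parameter $m$ ranges only over $0\le m\le 2n$, so a recurrence in $m$ needs this initial value for every $n$. Proving $I_1(0,n)=\Jh_1(0,n)$ is exactly the nontrivial base case of the paper (Proposition \ref{prop: base case e=1, m=0}). There both sides are shown, via Stokes' theorem, to satisfy the same inhomogeneous first-order recurrence in $n$, which is then combined with $I_1(0,0)=G$ and $J_1(0,0)=8G$.

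\textbf{How the paper proceeds instead.} It avoids series entirely. It derives matching recurrences directly on the integrals (Propositions \ref{prop: pole reduction I}, \ref{prop:J1}, \ref{prop:J2}, renormalized in Corollaries \ref{cor: s=0 I family pole reduction recurrences} and \ref{cor: renormalized J relations}). These mix $\epsilon=0,1$ and shift $n$, and the proof then inducts on $m$ from $m=0,1$.

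A hypergeometric proof is not ruled out; the paper mentions one for a generalization of (a). But you would have to actually produce the transformation linking the double integral above to the ${}_3F_2(1)$, and that is the missing idea.
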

The conditions $m\leq 2n$ and $m\leq 2n+1$ in each part are needed for convergence of the hypergeometric side. The $\Gamma$-factor in each equation is a rational number.
\medskip\par 
I have not been able to find a direct proof of Theorem \ref{thm1} by a simple change of variables (or a sequence thereof). The proof we shall give establishes some parallel recurrences for the two families (see \S \ref{sec: I}, \S \ref{sec: J} and \S \ref{subsec: nontrivial base case}). These recurrences, which might be of interest in their own right, are then used together with some base cases to deduce Theorem \ref{thm1} inductively. All the recurrences proved in the paper are established essentially by computations in the relative de Rham cohomology, or more concretely, by applications of Stokes' theorem or linearity with respect to integrands. So the proof of Theorem \ref{thm1} is consistent with the philosophy of the Kontsevich--Zagier period conjecture \cite{KZ}.  
\medskip\par 
Both families \eqref{eq2} and \eqref{eq4} satisfy 2-dimensional Pascal-like recurrences (see \S \ref{subsec: pascal I} and \S \ref{subsec: pascal J}, also the notation of \S\ref{sec: Setup}). This allows one to propagate Theorem \ref{thm1} to relate the two 3-parameter families \eqref{eq2} and \eqref{eq4}. However, the relations one gets from this approach are not as direct as Theorem \ref{thm1} and are not included in the paper.
\medskip\par 
A word on the arithmetic side of the picture is in order. In the special case when $n=m$, the formulas of Theorem \ref{thm1} simply read
\begin{align}
\int\limits_\Delta\frac{h^mf^m}{g^{2m+1}}\,\dd x\,\dd y
&= 2^{-2m-3}
\int\limits_{[0,1]^2}
\frac{x^{m-\frac12}y^m(1-x)^m(1-y)^{m-\frac12}}
{(1-xy)^{m+1}}\,\dd x\,\dd y \label{eq5}\\
\int\limits_\Delta\frac{h^mf^m}{g^{2m}}\,\dd x\,\dd y
&= 2^{-2m-3}
\int\limits_{[0,1]^2}
\frac{x^{m-\frac12}y^m(1-x)^m(1-y)^{m-\frac12}}
{(1-xy)^m}\,\dd x\,\dd y,\notag
\end{align}
which are in $\Q+\Q G$ by any of \cite{RZ}, \cite{Ne} or \cite{EMN}. The subfamily of integrals \eqref{eq5}, which was first discovered by Rivoal and Zudilin \cite{RZ} using a hypergeometric approach, seems to be rather special. On the one hand, Nesterenko \cite{Ne} showed that the integrals \eqref{eq5} lead to\footnote{Denoting the value of the integral \eqref{eq5} by $u_mG-v_m$ with $u_m,v_m\in\Q$, the sequence of rational numbers in question is $(v_m/u_m)$.} a sequence of rational numbers $p_m/q_m$ ($p_m,q_m$ integers $>0$) such that $|G-p_m/q_m|\leq 1/q_m^{0.52}$ for all sufficiently large $m$ (see also \cite{Ri1}, \cite{KR}, \cite{RZ}). This was recently improved in \cite{Es1}, where we used particular linear combinations of integrals in the same family \eqref{eq5} to construct a sequence of rational numbers $p_m/q_m$ ($p_m,q_m$ integers $>0$) such that $|G-p_m/q_m|\leq 1/q_m^{0.62}$ for all sufficiently large $m$. On the other hand, Zudilin has shown in \cite{Zu1} that the sequence of integrals \eqref{eq5} satisfies an Ap\'ery-like recurrence of order two. In \cite{Es3} we show that the generating function of this sequence satisfies a Picard--Fuchs differential equation. It is perhaps interesting that it is exactly at this subfamily that the relation between the larger motivic and hypergeometric families becomes the simplest. We should also mention that Nesterenko\footnote{Note that our notation in \eqref{eq6} is different from Nesterenko's.} \cite{Ne} has given upper bounds for the lowest denominators of the coefficients of 1 and $G$ in the hypergeometric integrals \eqref{eq6}. These bounds seem to be the sharpest on the subfamily \eqref{eq5}.
\medskip\par
After the completion of the present work, we found a simple 3-parameter identity generalizing Theorem \ref{thm1}(a). The proof we have of that identity is of a substantially different nature from the arguments in this paper, relying on classical hypergeometric transformations, and will be treated separately. We have decided to keep the present work in its current form because it also establishes recurrence relations linking the two families that are not apparent from the hypergeometric argument.

\section{Setup}\label{sec: Setup}
Throughout the paper, $\D$ is as in \eqref{eq1} and $f,g,h$ are as in \eqref{eq3}. The letters $m,n,s$ will always refer to integers. The symbol $\epsilon$ will always be $0$ or $1$. It will be convenient to renormalize and reparametrize the family of integrals \eqref{eq2} and \eqref{eq4}, as follows.

For any integers $s,m,n$ and $\epsilon\in \{0,1\}$, we shall set
\begin{equation}\label{eq: basic integral parametrizations 1}
I_\epsilon(s,m,n)
:=\int\limits_\Delta
\left(\frac{4f}{g^2}\right)^{\!s}
\left(\frac{h}{g^2}\right)^{\!m} f^n \,
\frac{\dd x\,\dd y}{g^\epsilon} = 4^s\int\limits_\Delta
\frac{f^{s+n}h^m}{g^{2s+2m+\epsilon}}
\,
\dd x\,\dd y.
\end{equation}
The convergence conditions are
\begin{equation}\label{eq: parameter conds}
m,n, s+n\geq 0.
\end{equation}
Note that the parity of the exponent of $g$ is given by $\epsilon$. The family of integrals we obtain as $\epsilon, m,n,s$ vary with \eqref{eq: parameter conds} are exactly those in \eqref{eq2} with \eqref{eq: original I family convergence cond}. The integrals \eqref{eq2} on the plane $k=2m-1+\epsilon$ (which are the subject of parts (a) or (b) of Theorem \ref{thm1} depending on whether $\epsilon=1$ or $\epsilon=0$, respectively), become the family of integrals $I_\epsilon(s, m,n)$ on the plane $s=0$.

Next, for any integers $s,m,n$, we shall set
\begin{equation}\label{eq: basic integral parametrizations 2}
\begin{split}
    J(s,m,n)
&:=\int\limits_{[0,1]^2}
(1-xy)^s(xy)^m
\left(\!\frac{(1-x)(1-y)}{1-xy}\!\right)^{\!n}
\frac{\dd x\,\dd y}{\sqrt{x(1-y)}(1-xy)}\\
&=\int\limits_{[0,1]^2} \frac{x^{m-1/2}y^m(1-x)^n(1-y)^{n-1/2}}{(1-xy)^{n-s+1}} \dd x\,\dd y.
\end{split}
\end{equation}
The convergence conditions are exactly those in \eqref{eq: parameter conds}. The family of integrals $J(s,m,n)$ as $s,m,n$ vary with \eqref{eq: parameter conds} are exactly those in \eqref{eq4}. The planes $p=m$ and $p=m-1$ (which are the subject of part (a) and (b) of Theorem \ref{thm1}, respectively) correspond to the planes $s=n-m$ and $s=n-m+1$ in the parameters $(s,m,n)$.

It will be also convenient to set
\begin{equation}\label{eq: Gamma factors}
    A_{\epsilon}(m,n)
:=2^{-2n-3}
\frac{\Gamma(n+\frac12)\Gamma(n+\frac32 - \epsilon)}
{\Gamma(2n-m+\frac32-\epsilon)\Gamma(m+\frac12)} \quad\quad\quad (\epsilon=0,1),
\end{equation}
where $\Gamma$ is the Gamma function. We can restate Theorem \ref{thm1} in our new notation as:
\begin{thm}\label{thm1 reformulated}
Let $n,m$ be non-negative integers and $\epsilon\in\{0,1\}$. Then
\[
I_\epsilon(0,m,n)=A_{\epsilon}(m,n)\cdot J(n-m+1-\epsilon,m,n) \qquad (0\leq m\leq 2n+1-\epsilon).
\]
\end{thm}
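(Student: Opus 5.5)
We prove the identity by induction on $(m,n)$. For each family we derive parallel recurrences from Stokes' theorem and linearity in the integrand (i.e.\ relations in relative de Rham cohomology), show that after multiplication by $A_\epsilon(m,n)$ the $J$-recurrences have the same shape as the $I$-recurrences, and then check enough base cases to pin both sides down.

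\textbf{Recurrences for $I_\epsilon(s,m,n)$.} We use two kinds of relations.
\begin{itemize}
\item Algebraic identities among $f,g,h$. The basic one is
\[
h=\bigl((1-x)^2-y^2\bigr)\bigl((1+x)^2-y^2\bigr)=g^2-4f,
\]
which can be checked by expanding. Consequently $h/g^2=1-4f/g^2$, and so
\[
I_\epsilon(s,m+1,n)=I_\epsilon(s,m,n)-I_\epsilon(s+1,m,n).
\]
This is the Pascal-like recurrence.
\item Stokes' theorem applied to exact forms $d\omega$ with $\omega=P\,(y\,\dd x - x\,\dd y)$ or $\omega=P\,\dd x$, $P\,\dd y$, where $P=f^a h^b g^{-c}\times(\text{monomial})$. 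Because $f$ and $h$ vanish on $\partial\Delta$ ($f$ on the axes, $h$ on $x+y=1$), the boundary terms drop out whenever the exponents are positive. This yields relations that shift $n$ and $s$, and which connect $\epsilon=0$ with $\epsilon=1$.
\end{itemize}

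\textbf{Recurrences for $J(s,m,n)$.} We do the same thing on $[0,1]^2$.
\begin{itemize}
\item The linear identity $(1-xy)=(1-x)+x(1-y)$, and similar ones, give Pascal-type relations. For example,
\[
J(s+1,m,n)=J(s,m,n)-J(s+1,m+1,n)+\dots
\]
after writing $xy$ in terms of $1-xy$.
\item Integration by parts in $x$ or in $y$ gives contiguous relations of ${}_3F_2$ type. The boundary terms vanish because of the factors $x^{m-1/2}$ and $(1-y)^{n-1/2}$.
\end{itemize}
The goal is a closed system of recurrences on the planes $s=0$ for $I$ and $s=n-m+1-\epsilon$ for $J$. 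It should generate every value from the data $m=0$ (or $n=0$), and it should move $m$ and $n$ while preserving these planes. Restricting the Pascal relations to the planes requires combining them with the Stokes relations, so that the $s$-shift is eliminated.

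\textbf{Matching.} We verify that the function $A_\epsilon(m,n)J(n-m+1-\epsilon,m,n)$ satisfies exactly the same recurrences in $(m,n,\epsilon)$ as $I_\epsilon(0,m,n)$. This is a check that the ratios $A_\epsilon(m',n')/A_\epsilon(m,n)$ reproduce the rational coefficients of the recurrences, using $\Gamma(z+1)=z\Gamma(z)$.

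\textbf{Base cases.} These are computed directly.
\begin{itemize}
\item For $m=0$, the left side is $\int_\Delta x^{2n}y^{2n}g^{-\epsilon}$. The right side $J(n+1-\epsilon,0,n)$ is a product of Beta integrals when $\epsilon=1$, since then the power of $1-xy$ is zero. For $\epsilon=0$ one of the two base integrals is genuinely non-elementary (a ``nontrivial base case''). We handle it by relating both sides to $G$: polar-type coordinates on $\Delta$ with $g=1-r^2$, and termwise expansion of $(1-xy)^{-1}$ on the square.
\item The boundary case $m=2n+1-\epsilon$, where the $\Gamma$ in the denominator becomes $\Gamma(1/2)$, may also be needed as an edge of the induction region.
\end{itemize}

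\textbf{Main obstacle.} The hardest step is finding the Stokes relations for $I$ that close up on the plane $s=0$ and match the $J$ contiguous relations exactly. The primitives $\omega$ have to be guessed. I would first try $\omega = f^n h^m g^{-2m-\epsilon+1}\,(x\,\dd y - y\,\dd x)$, using the Euler-type homogeneity of $f$ and $g$ to simplify $d\omega$. Then I would solve a small linear system in the coefficients to match the relations on the $J$ side. The nontrivial base case is the secondary difficulty.
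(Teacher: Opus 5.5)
Your overall architecture matches the paper's: parallel Stokes/Pascal relations, normalization by $A_\epsilon$, and induction on $m$ from $m=0$. However, the proposal has genuine gaps exactly where the proof is nontrivial.

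First, your base cases are swapped. The power of $1-xy$ in the denominator of $J(n+1-\epsilon,0,n)$ is $n-(n+1-\epsilon)+1=\epsilon$. So $\epsilon=0$ is the elementary case: $\int_\Delta x^{2n}y^{2n}\,dx\,dy$ and $J(n+1,0,n)=B(\frac12,n+1)B(1,n+\frac12)$ are both explicit and agree after multiplying by $A_0(0,n)$. The case involving $G$ is $\epsilon=1$, i.e.\ $\int_\Delta f^n g^{-1}\,dx\,dy$ versus $J(n,0,n)$. For that case, ``polar coordinates plus termwise expansion'' gives, for each $n$, an integral on one side and a series on the other. That is no mechanism for proving equality for all $n$ unless you find closed forms for both, which you do not supply.

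The missing idea is that both sequences satisfy the same first-order inhomogeneous recurrence $8(n+1)X(n+1)=(2n+1)X(n)-\frac{(6n+5)(2n)!(2n+1)!}{(4n+3)!}$. It then suffices to compare at $n=0$, where $\int_\Delta g^{-1}=G$, $J(0,0,0)=8G$ and $A_1(0,0)=\frac18$. Proving that recurrence needs Stokes relations whose boundary terms do \emph{not} vanish:
\begin{itemize}
\item for $m=0$ there is no factor $h$ to kill the form on the hypotenuse $x+y=1$, which produces $B(2n+2,2n+2)$ and $B(2n+1,2n+1)$;
\item there is no factor $y^m$ to kill it on the bottom edge of the square, which produces $-(4n+2)B(\frac12,n+1)$.
\end{itemize}
Note also that the factors $x^{m-1/2}$ and $(1-y)^{n-1/2}$ you cite blow up rather than vanish when $m=0$ or $n=0$.

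The same phenomenon blocks the step $m=0\to m=1$ for $\epsilon=0$. The recurrence expressing $X_0(m+1,n)$ through $X_0(m,n)$, $X_0(m-1,n)$ and $X_1(m,n+1)$ is only valid for $m\ge1$. You therefore need the separate inhomogeneous relation $X_0(1,n)=X_0(0,n)+8(n+1)X_1(0,n+1)-B(2n+2,2n+2)$, verified for $X=I$ and for $X=A\cdot J$. The latter needs a Gamma-factor identity that matches the inhomogeneous terms.

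Second, the core of the argument is deferred to ``the primitives have to be guessed,'' and you have put the difficulty on the wrong side. On the $I$ side your $\omega=f^nh^mg^{-2m-\epsilon+1}(x\,dy-y\,dx)$ is exactly the right form; the Euler operator does the work, with $\mathbf{E}f=4f$, $\mathbf{E}g=2g-2$, $\mathbf{E}h=4h-4g$. One of the two resulting relations has middle term $\int_\Delta f^nh^mg^{1-2m}$, which lies on $s=-1$. This is repaired by using $f^{n+1}h^mg^{-2m-1}$ (i.e.\ $s=1$) and then Pascal to return to $s=0$.

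The real work is on the $J$ side. You need two Stokes relations that stay inside the symmetric family $J(s,m,n)$. On $s=n-m$ they must become relations among the $J_0$'s and $J_1$'s whose coefficients, after rescaling by the $A_\epsilon$, are exactly those of the $I$-relations. A single integration by parts in $x$ or in $y$ fails here, and so does your identity $1-xy=(1-x)+x(1-y)$. Writing $\Phi$ for the $J$-integrand, these produce terms like $x\Phi$ or $(1-x)\Phi$, which break $a_1=a_2$, $b_1=b_2$ and leave the family. The paper instead uses combined forms $F(A\,dy-B\,dx)$ with specific rational $A,B$ (Propositions \ref{prop:J1} and \ref{prop:J2}), and then checks the Gamma-factor ratios explicitly.

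Your $J$-Pascal relation is also misindexed: $1=(1-xy)+xy$ gives $J(s+1,m,n)=J(s,m,n)-J(s,m+1,n)$. Without these relations, the ``matching'' step is an assertion, not a proof.
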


The case $\epsilon=1$ (resp. $\epsilon=0$) is part (a) (resp. part (b)) of Theorem \ref{thm1}.

\section{Recurrences for the motivic family}\label{sec: I}
\subsection{Pascal relation}\label{subsec: pascal I}
Note that
\begin{equation}\label{eq: h=g^2-4f}
    h= g^2-4f.
\end{equation}
Writing
\[
1 = \frac{h}{g^2}+ \frac{4f}{g^2},
\]
we immediately obtain from \eqref{eq: basic integral parametrizations 1} that for all $\epsilon,m,n,s$ in the convergence range,
\begin{equation}\label{eq: Pascal for I}
I_\epsilon(s,m,n) = I_\epsilon(s+1,m,n) + I_\epsilon(s,m+1,n).
\end{equation}
That is, for each $\epsilon,n$, the 2-dimensional sequence $X(s,m)=I_\epsilon(s,m,n)$ is a solution to the recurrence
\begin{equation}\label{eq: Pascal eq}
X(s,m) = X(s+1,m) + X(s,m+1),
\end{equation}
which we shall call the \emph{Pascal relation}. 

\subsection{Pole reduction recurrences}
We now give recurrences in the motivic family given by a pole reduction process. By the pole order of $I_\epsilon(s,m,n)$ (or more precisely, of the corresponding integrand or differential form) we mean $2s+2m+\epsilon$.

\begin{prop}\label{prop: pole reduction I}
Let $m\ge1$, $n\ge0$, and $s+n\ge0$. Then
\begin{align}
    2(s+m)I_1(s,m,n)+(2n+1)I_0(s,m,n) -2mI_1(s,m-1,n)&=0\label{eq1 pole red I}\\
    (2s+2m-1)I_0(s,m,n)+8(n+1)I_1(s-1,m,n+1)-2mI_0(s,m-1,n) &=0.\label{eq2 pole red I}
\end{align}
\end{prop}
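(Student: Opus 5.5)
Both identities should come from Stokes' theorem applied to a $1$-form built from the Euler vector field $E=x\partial_x+y\partial_y$. For a function $F$ on $\D$ one has $d\bigl(F\,(x\,\dd y-y\,\dd x)\bigr)=(E F+2F)\,\dd x\,\dd y$. I take
\[
F=f^{a}h^{m}g^{-N}
\]
with suitable $a,N$ and integrate over $\D$.

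\emph{Logarithmic derivatives.} A direct computation gives $Ef=4f$ and $Eg=-2(x^2+y^2)=2g-2$. Using \eqref{eq: h=g^2-4f}, one gets $Eh=2g(2g-2)-16f=4h-4g$. Hence
\[
EF+2F=(4a+4m-2N+2)\,f^ah^mg^{-N}-4m\,f^ah^{m-1}g^{1-N}+2N\,f^ah^mg^{-N-1}.
\]

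\emph{First identity.} For \eqref{eq1 pole red I} I choose $a=s+n$ and $N=2s+2m$, so the first coefficient equals $4n+2$. By \eqref{eq: basic integral parametrizations 1}, after multiplying by $4^s$ the three terms integrate to $I_0(s,m,n)$, $I_1(s,m-1,n)$ and $I_1(s,m,n)$. Dividing by $2$ yields exactly \eqref{eq1 pole red I}, provided the integral of the exact form is zero.

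\emph{Second identity.} For \eqref{eq2 pole red I} I choose $a=s+n$ and $N=2s+2m-1$, so the first coefficient equals $4n+4$. Now $f^{s+n}h^mg^{-(2s+2m-1)}=4^{-(s-1)}\times$ (integrand of $I_1(s-1,m,n+1)$). The other two terms give $I_0(s,m-1,n)$ and $I_0(s,m,n)$ after multiplying by $4^s$. The factor $4^s(4n+4)4^{-(s-1)}/2=8(n+1)$ and the coefficient $2N/2=2s+2m-1$ reproduce \eqref{eq2 pole red I}.

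\emph{Vanishing of the boundary term.} It remains to show $\int_{\partial\D}F\,(x\,\dd y-y\,\dd x)=0$.
\begin{itemize}
\item On the edge $x=0$ the form restricts to $-y\,\dd x=0$, and similarly on $y=0$. So the legs contribute nothing, even when $s+n=0$ and $f^a\equiv1$.
\item On the hypotenuse $x+y=1$, the factor $1-x-y$ of $h$ kills $F$ because $m\geq1$; this is where that hypothesis is used.
\end{itemize}

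\emph{Main obstacle.} The delicate point is that $g$ vanishes at the vertices $(1,0)$ and $(0,1)$, where $F$ and the integrands of all the $I$'s have poles. So Stokes' theorem cannot be applied naively. I would apply it on the truncated region $\D_\varepsilon$, obtained by removing small discs (or small triangles cut off by lines) around these two vertices, and show that the boundary integral over the cut arcs tends to $0$ as $\varepsilon\to0$.

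To do this, I would use local coordinates near $(1,0)$, say $u=1-x-y$ and $v=y$. There $f\sim v^2$, $h\sim 4u(u+2v)$ up to units, and $g\sim 2(u+v)$ up to a unit. I would then estimate $|F|\cdot|x\,\dd y-y\,\dd x|$ on the cut by a power of $\varepsilon$. The exponent should be positive precisely because of the convergence conditions \eqref{eq: parameter conds}: the integrand order $2a+m-(N+1)+2$ is $>0$. Since $F$ has one less power of $g$ than the worst integrand term, the boundary term should be $O(\varepsilon^{\delta})$ for some $\delta>0$, or at worst is controlled by the convergence of the area integrals. I expect this bookkeeping, especially the borderline cases, to be the only real work. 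The remaining algebra is as sketched above.
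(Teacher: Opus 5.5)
Your proposal is correct and is essentially the paper's proof. It uses the same identity $\dd\bigl(F(x\,\dd y-y\,\dd x)\bigr)=(\bE F+2F)\,\dd x\,\dd y$, the same form $4^s f^{s+n}h^m g^{-(2s+2m-\epsilon)}(x\,\dd y-y\,\dd x)$ with $\epsilon=0,1$ for the two identities, the same vanishing on the legs and the hypotenuse, and the same corner truncation $\D_\varepsilon$ at $(1,0)$ and $(0,1)$.

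The corner estimate you left open is immediate and has no borderline cases. On the cut, $f,h\ll\varepsilon^2$ and $g\asymp\varepsilon$, so the boundary term is $O(\varepsilon^{2n+1+\epsilon})$. Since $h$ vanishes to order two at the corners, your heuristic exponent should contain $2m$ rather than $m$; with $m$ it would wrongly suggest a problem when $m>2n+1+\epsilon$.
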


\begin{proof}
Let $\mathbf{E}$ be the differential operator $x\,\partial/\partial x+y \, \partial/\partial_y$. Then for every $F\in \Q(x,y)$,
\begin{equation}\label{eq Euler op}
\dd\,(F\cdot(x\dd y-y\dd x)) =  (\bE F +2F) \,\dd x\, \dd y,
\end{equation}
where here and throughout the paper, to shorten the notation we drop the $\wedge$ symbol for wedge product of differential forms from our writing. Setting $F=f^bh^a/g^c$, a direct computation using 
\[
\bE f=4f,\qquad \bE g=2g-2,\qquad \bE h= \bE (g^2-4f)=4h-4g
\]
and \eqref{eq Euler op} gives 
\[
\dd\left(\frac{f^bh^a}{g^c}(x\dd y-y\dd x)\right)=\left(2c\frac{f^bh^a}{g^{c+1}}+2(2a+2b-c+1)\frac{f^bh^a}{g^c}-4a \frac{f^bh^{a-1}}{g^{c-1}} \right) \dd x\, \dd y.
\]
Now set
\[
\eta
= 4^s\frac{f^{n+s}h^m}{g^{2s+2m-\epsilon}}(x\dd y-y \dd x).
\]
By the last computation we have
\begin{equation}\label{equality of diff forms for pole reduction recursion I family}
\begin{split}
\frac12\,\dd\eta
={}&(2s+2m-\epsilon) 4^s\frac{f^{n+s}h^m}{g^{2s+2m-\epsilon+1}}\,\dd x\,\dd y \ +(2n+1+\epsilon)4^s\frac{f^{n+s}h^m}{g^{2s+2m-\epsilon}}\,\dd x\,\dd y\\
&-2m4^s\frac{f^{n+s}h^{m-1}}{g^{2s+2m-\epsilon-1}}\,\dd x\,\dd y.
\end{split}
\end{equation}
Integrating over the simplex $\D$, the right hand side becomes the left hand sides of \eqref{eq1 pole red I} and \eqref{eq2 pole red I} for $\epsilon=0$ and $\epsilon=1$, respectively (the factor $8$ in the second term of \eqref{eq2 pole red I} comes from the necessary shift in the $s$-coordinate). Thus the proposition is equivalent to the integral of $\dd \eta$ over $\D$ being zero.

To show that $\int_\Delta \dd\eta$ vanishes, we want to apply Stokes' theorem. However, $\dd\eta$ is only regular in $\R^2\setminus A$, where $A$ is the unit circle $g=0$, and $A$ meets $\D$ at the two points $P=(1,0)$ and $Q=(0,1)$ on the boundary $\partial \D$. The remedy is standard, as follows. Here and elsewhere, given two functions $\phi$ and $\psi$ on some set $U\subset \R$ which will be understood from the context, the notation $\phi\ll \psi$ (resp. $\phi\asymp \psi$) means $|\phi|\leq  C|\psi|$ on $U$ for some positive constant $C$ (resp. $\phi \ll \psi$ and $\psi\ll \phi$).

\begin{figure}[ht]
\centering
\begin{tikzpicture}[scale=2.5]
\def\e{0.25}
\fill[gray!15]
  (0,0)
  -- ({1-\e},0)
  -- ({1-\e/2},{\e/2})
  -- ({\e/2},{1-\e/2})
  -- (0,{1-\e})
  -- cycle;
\draw[dashed] (0,0) -- (1,0) -- (0,1) -- cycle;
\draw[thick]
 (0,0)
  -- ({1-\e},0)
  -- ({1-\e/2},{\e/2})
  -- ({\e/2},{1-\e/2})
  -- (0,{1-\e})
  -- cycle;
\draw[dashed]
  (1,0) arc[start angle=0,end angle=90,radius=1];
\fill (1,0) circle (0.012);
\fill (0,1) circle (0.012);
\node[right] at (1,0) {$P=(1,0)$};
\node[above] at (0,1) {$Q=(0,1)$};
\node[above] at ({1-1.1*\e},0.015) {$P_\varepsilon$};
\node[right] at (0.01,{1-1.1*\e}) {$Q_\varepsilon$};
\node at (0.31,0.31) {$\Delta_\varepsilon$};
\node[above right] at (0.70,0.72) {$g=0$};
\node[below] at (0,0) {$(0,0)$};
\end{tikzpicture}
\caption{The truncated simplex $\Delta_\varepsilon$.}
\label{fig:truncated-simplex}
\end{figure}
 
For a small $\varepsilon>0$, let $\D_\varepsilon\subset\D$ be the chain obtained by cutting the corners of $\D$ near $P$ and $Q$ as shown in Figure \ref{fig:truncated-simplex}, where the small path marked as $P_\varepsilon$ (resp. $Q_\varepsilon$) is the line segment from $(1-\varepsilon,0)$ to $(1-\varepsilon/2,\varepsilon/2)$ (resp. from $(\varepsilon/2,1-\varepsilon/2)$ to $(0,1-\varepsilon)$). Note that $\eta$ vanishes on the components of $\partial \D_\varepsilon$ on the $x,y$-axes and the line $1-x-y=0$ (the former thanks to $x\dd y-y\dd x$ and the latter because $m\geq 1$). By Stokes' theorem now we get
\[
\int\limits_{\D_\varepsilon} \dd \eta=\int\limits_{\partial\D_\varepsilon} \eta = \int\limits_{P_\varepsilon} \eta+\int\limits_{Q_\varepsilon} \eta.
\]
The integrals of $\eta$ over $P_\varepsilon$ and $Q_\varepsilon$ go to zero as $\varepsilon\rightarrow 0$. Indeed, focusing on the integral over $P_\varepsilon$ as the other one follows by symmetry, parametrize $P_\varepsilon$ by $x=y+(1-\varepsilon)$ as $y$ goes from zero to $\varepsilon/2$, so that $x\dd y-y\dd x=(1-\varepsilon)\dd y$. Using the estimates $0\leq f,h\ll \varepsilon^2$ and $g\asymp \varepsilon$ on $P_\varepsilon$, we get
\[
\int_{P_\varepsilon} \eta \,= 4^s\int_0^{\varepsilon/2} \frac{f^{n+s}h^m}{g^{2s+2m-\epsilon}} (1-\varepsilon) \,\dd y  \ll \varepsilon^{2n+\epsilon+1} \rightarrow 0
\]
as $\varepsilon\rightarrow 0$.
\end{proof}

\begin{rem}\label{rem: family I pole reduction recurrence dR cohomology}
The proof can be formulated in the algebro-geometric language of \cite{EMN}. Indeed, equation \eqref{equality of diff forms for pole reduction recursion I family} implies that the right hand side vanishes in the relative algebraic de Rham cohomology
\[
H^2_\dR(\wX\setminus \wA, (\wB\cup E)\setminus\wA),
\]
where $\wX$ is the blow-up of the affine plane $\mathrm{Spec}\,\Q[x,y]$ at $P$ and $Q$, $E$ is the exceptional divisor above $\{P,Q\}$, $\wA$ is the strict transform of the unit circle $\{g=0\}\subset \mathrm{Spec}\,\Q[x,y]$, and $\wB$ is the strict transform of the boundary divisor $\{xy(1-x-y)=0\}\subset \mathrm{Spec}\,\Q[x,y]$.
\end{rem}

We will be using the pole reduction formulas in an inductive argument to prove Theorem \ref{thm1 reformulated}. Recall that Theorem \ref{thm1 reformulated} concerns the values of the $I$-family when $s=0$. Setting $s=0$ in the first pole reduction formula \eqref{eq1 pole red I}, we obtain a recursion that lives entirely on the parameter plane $s=0$. However, the second formula does not live on a plane with a fixed $s$-value. But this is easily fixed using the Pascal relation (see \S \ref{subsec: pascal I}). This is done in the next corollary, where we also record the specialization of \eqref{eq1 pole red I} to $s=0$ for future reference.

\begin{cor}\label{cor: s=0 I family pole reduction recurrences}
Let $m\ge1$ and $n\ge0$. Then we have
\begin{align}
(2n+1)I_0(0,m,n) &= 2mI_1(0,m-1,n) - 2mI_1(0,m,n)\label{eq1 s=0 pole red I}\\
\begin{split}(2m+1)I_0(0,m+1,n)
&= (4m+1)I_0(0,m,n)-2mI_0(0,m-1,n)\\
& \ \ \ +8(n+1)I_1(0,m,n+1).
\end{split}\label{eq2 s=0 pole red I}
\end{align}
\end{cor}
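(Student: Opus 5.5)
The plan is to specialize Proposition \ref{prop: pole reduction I} to $s=0$ and, for the second formula, to use the Pascal relation \eqref{eq: Pascal for I} to remove the term with $s=-1$. Throughout I would check that every integral involved satisfies the convergence conditions \eqref{eq: parameter conds}. The only shifted terms have $s=-1$ and $n$ replaced by $n+1$, so $s+n$ becomes $n\geq 0$ and they converge.

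For \eqref{eq1 s=0 pole red I}, set $s=0$ in \eqref{eq1 pole red I}. The hypotheses $m\ge 1$, $n\ge 0$, $s+n\ge 0$ hold, and the identity
\[
2mI_1(0,m,n)+(2n+1)I_0(0,m,n)-2mI_1(0,m-1,n)=0
\]
is exactly \eqref{eq1 s=0 pole red I} after rearranging.

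For \eqref{eq2 s=0 pole red I}, I would apply \eqref{eq2 pole red I} twice with $s=0$: once with $m+1$ in place of $m$, and once with $m$ itself. Both are allowed since $m\geq 1$. This gives
\begin{align*}
(2m+1)I_0(0,m+1,n)+8(n+1)I_1(-1,m+1,n+1)-2(m+1)I_0(0,m,n)&=0,\\
(2m-1)I_0(0,m,n)+8(n+1)I_1(-1,m,n+1)-2mI_0(0,m-1,n)&=0.
\end{align*}
Next, the Pascal relation \eqref{eq: Pascal for I} at $(s,m,n)=(-1,m,n+1)$ gives
\[
I_1(-1,m+1,n+1)=I_1(-1,m,n+1)-I_1(0,m,n+1).
\]
Substituting this into the first display, and then using the second display to replace $8(n+1)I_1(-1,m,n+1)$ by $2mI_0(0,m-1,n)-(2m-1)I_0(0,m,n)$, every $s=-1$ term disappears. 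Collecting terms, the coefficient of $I_0(0,m,n)$ becomes $2(m+1)+(2m-1)=4m+1$, and the result is \eqref{eq2 s=0 pole red I}.

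I do not expect a real obstacle here. The only step needing care is this choice of two instances of \eqref{eq2 pole red I} together with one Pascal step, arranged so that the off-plane terms cancel exactly, along with the convergence check above.
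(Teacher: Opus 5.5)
Your proof is correct and uses the same ingredients as the paper: \eqref{eq1 s=0 pole red I} is \eqref{eq1 pole red I} at $s=0$, and \eqref{eq2 s=0 pole red I} comes from \eqref{eq2 pole red I} combined with the Pascal relation. The only difference is bookkeeping. The paper uses a single instance of \eqref{eq2 pole red I} at $s=1$ and applies Pascal twice to the $I_0(1,\cdot,n)$ terms, whereas you use two instances at $s=0$ and one Pascal step in the $\epsilon=1$ family to cancel the $I_1(-1,\cdot,n+1)$ terms; the two routes agree because, by $h+4f=g^2$, the forms $\eta$ in the proof of Proposition \ref{prop: pole reduction I} themselves satisfy the Pascal relation.
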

\begin{proof}
For the first formula, simply set $s=0$ in \eqref{eq1 pole red I}. For the second formula, first set $s=1$ in \eqref{eq2 pole red I}:
\[
(2m+1)I_0(1,m,n)+8(n+1)I_1(0,m,n+1)-2mI_0(1,m-1,n) =0.
\]
By the Pascal relation, we have
\[
\begin{split}
I_0(1,m,n) & = I_0(0,m,n) - I_0(0,m+1,n) \\
I_0(1,m-1,n) & = I_0(0,m-1,n) - I_0(0,m,n).  
\end{split}
\]
Substituting these in the previous equation we get \eqref{eq2 s=0 pole red I}.
\end{proof}

\section{Recurrences for the hypergeometric family}\label{sec: J}
Recall that the hypergeometric family $J(s,m,n)$ is as in \eqref{eq: basic integral parametrizations 2}. The integrals converge if $m,n,s+n\geq 0$.

\subsection{Pascal relation}\label{subsec: pascal J}
Using $1=(1-xy)+xy$ in the defining integral of $J(s,m,n)$ in \eqref{eq: basic integral parametrizations 2} we immediately obtain
\begin{equation}\label{eq J Pascal}
J(s,m,n) = J(s+1,m,n) + J(s,m+1,n) 
\end{equation}
for all integers $m,n,s$ with $m,n,s+n\geq 0$. Thus, fixing $n$, each sequence $X(s,m)=J(s,m,n)$ satisfies (what we called) the Pascal relation
\[
X(s,m) = X(s+1,m) + X(s,m+1).
\]
(Recall from \S \ref{subsec: pascal I} that the $I$-integrals satisfy the same 2-dimensional recurrence.)

\subsection{Pole reduction recurrences}\label{subsec: J pole reduction relations}
The formulas of Theorem \ref{thm1 reformulated} relate 
\[
I_\epsilon(0,m,n) \quad \leftrightarrow \quad J(n-m+1-\epsilon,m,n).
\]
Corollary \ref{cor: s=0 I family pole reduction recurrences} gives two relations within the family of integrals $I_\epsilon(0,m,n)$. In this subsection, we shall give the $J$-analogues of these relations, which will be relations within the family of integrals of the form $J(n-m,m,n)$ and $J(n-m+1,m,n)$. The relations are recorded in Corollary \ref{cor: J special plane relations} below. The four relations in Corollary \ref{cor: s=0 I family pole reduction recurrences} and Corollary \ref{cor: J special plane relations} will play a key role in the proof of Theorem \ref{thm1 reformulated}.

We will obtain the relations in Corollary \ref{cor: J special plane relations} as special cases of two more general recurrences in the $J$-family, which we shall prove first in Propositions \ref{prop:J1} and \ref{prop:J2} below.

\begin{prop}\label{prop:J1}
Let $m\geq 1$, $n\geq 0$ and $s+n\geq 0$. Then
\begin{equation}\label{eq:J1}
\begin{split}
(2m+2s+1)^2J(s+1,m,n) & = 
2m(2m-1)J(s+1,m-1,n)\\
& \quad -2(n-s)(2n+2s+1)J(s,m,n).
\end{split}
\end{equation}
\end{prop}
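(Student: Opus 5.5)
The plan is to imitate the proof of Proposition~\ref{prop: pole reduction I}. I would write down a few explicit $1$-forms on $[0,1]^2$ whose exterior derivatives are combinations of integrands of $J$-type, apply Stokes' theorem to each, and then eliminate the auxiliary integrals that appear. Throughout, write
\[
\Phi_{a,b,c,d,e}=x^{a}y^{b}(1-x)^{c}(1-y)^{d}(1-xy)^{e},
\]
and denote by $K(a,b,c,d,e)$ its integral over $[0,1]^2$. The three integrals in \eqref{eq:J1} are $K(m-\tfrac12,m,n,n-\tfrac12,s-n)$, $K(m-\tfrac32,m-1,n,n-\tfrac12,s-n)$ and $K(m-\tfrac12,m,n,n-\tfrac12,s-n-1)$.

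\textbf{The two exact forms.} I would use
\[
\omega_1=\Phi_{m+\frac12,\,m,\,n,\,n-\frac12,\,s-n+1}\,\dd y,
\qquad
\omega_2=\Phi_{m-\frac12,\,m,\,n,\,n+\frac12,\,s-n+1}\,\dd x,
\]
possibly with the extra factor of $(1-x)$ or $(1-y)$ moved to whichever variable makes its boundary terms vanish.
\begin{itemize}
\item Differentiating in $x$ (resp.\ $y$) brings down the exponents $m+\tfrac12$, $n$ and $s-n+1$ (resp.\ $m$, $n+\tfrac12$ and $s-n+1$), the last one together with a factor $y$ (resp.\ $x$).
\item The factor $(n-s)$ in \eqref{eq:J1} should come from the exponent $s-n+1$ of $1-xy$.
\item The factor $(2n+2s+1)$ should come from combining it with the half-integer exponents.
\end{itemize}
To reduce everything to the three target integrals, I would rewrite each resulting integrand using $x=1-(1-x)$, $y=1-(1-y)$ and $xy=1-(1-xy)$. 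This is the same linearity that gives the Pascal relation \eqref{eq J Pascal}, so every term becomes a $K$ with exponent pattern $(m-\tfrac12+i,\,m+j,\,n+k,\,n-\tfrac12+l,\,s-n+r)$ for small shifts $i,j,k,l,r$.

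\textbf{Elimination.} The two Stokes identities, together with a few of these linearity relations, give a small linear system. I expect one more exact form to be needed, for instance $\dd$ of $\Phi\,(x\,\dd y - y\,\dd x)$ or a form of the shape $\Phi\,\dd y$ with the $m$-exponent lowered, in order to reach the term $J(s+1,m-1,n)$. That term carries the coefficient $2m(2m-1)$, which looks like the product of two exponents, $2m$ and $2m-1$, each brought down by a differentiation. Likewise $(2m+2s+1)^2$ looks like a product of two first-order coefficients, which also suggests two successive Stokes steps. Concretely, I would first derive a relation expressing $J(s+1,m,n)$ through one auxiliary integral with an extra factor $1/(1-x)$ or $x$, then a second relation for that auxiliary integral, and finally eliminate it. I expect this elimination to be the main obstacle: identifying the right auxiliary forms so that all intermediate integrals cancel and exactly the coefficients of \eqref{eq:J1} appear. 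If guessing fails, I would set up the general ansatz of $\Phi$ times polynomial coefficients and solve the resulting linear equations.

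\textbf{Boundary terms and convergence.} On the edges $x=0$ and $y=0$ the forms vanish because of the positive powers $x^{m\pm\frac12}$ and $y^m$ (using $m\geq1$ where needed). On $x=1$ and $y=1$ they vanish because of the factors $(1-x)^{n+1}$ or $(1-y)^{n+\frac12}$, away from the corner $(1,1)$. Near $(1,1)$, where $1-xy$ vanishes, I would truncate the corner by a small segment as in Figure~\ref{fig:truncated-simplex}. Using $1-x,\,1-y\ll\varepsilon$ and $1-xy\asymp\varepsilon$ there, the contribution of the cut is bounded by a positive power of $\varepsilon$ thanks to $s+n\geq0$. The case $n=0$ may need separate care, since some factors $(1-x)^n$ then no longer vanish on the edge $x=1$. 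In that case I would either choose the forms so that the vanishing comes from a $(1-y)^{\cdot}$ factor instead, or deduce it by continuity in a real parameter replacing $n$.
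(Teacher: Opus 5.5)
There is a genuine gap. The whole content of the proof is exhibiting an explicit $1$-form whose exterior derivative is the integrand of \eqref{eq:J1}, and your proposal defers exactly this step (``the main obstacle''). The forms you do name also cannot work. Both $\omega_1$ and $\omega_2$, and the variants you mention (moving a factor $1-x$ or $1-y$, lowering the $m$-exponent, or using $\Phi\,(x\,\dd y-y\,\dd x)$), carry the factor $(1-xy)^{s-n+1}$. So their exterior derivatives are divisible by $(1-xy)^{s-n}$. The linearity relations ($1=x+(1-x)$, $1=xy+(1-xy)$, \dots) are pointwise identities and do not change this. Any relation you derive therefore has the form $\int\Psi=0$ with $\Psi\,\dd x\,\dd y=\sum_i c_i\,\dd\omega_i$, so $\Psi$ is divisible by $(1-xy)^{s-n}$. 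But the target integrand contains $2(n-s)(2n+2s+1)\,x^{m-1/2}y^m(1-x)^n(1-y)^{n-1/2}(1-xy)^{s-n-1}$. Its other factors are analytic and nonvanishing at generic points of the curve $xy=1$, so this term is not divisible by $(1-xy)^{s-n}$ unless $n=s$. Thus $J(s,m,n)$ is unreachable from your forms. Relatedly, your heuristic that the factor $n-s$ comes from the exponent $s-n+1$ is off by one: that exponent brings down $s-n+1$.

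What is needed is a form one level lower in $1-xy$, with a tangency property. The paper uses a single form $\eta=F(A\,\dd y-B\,\dd x)$, where $F$ is the integrand of $J(s+1,m,n)$ and
\[
A=\frac{4(1-x)\bigl(m+(n-m-s)xy\bigr)}{y(1-xy)},\qquad B=\frac{2(1-y)\bigl(2m+2n+1-(2m+2s+1)xy\bigr)}{1-xy}.
\]
The polar part of $(A,B)$ is $\frac{4(n-s)}{1-xy}\,(x(1-x),\,1-y)$, a vector field tangent to $xy=1$. Hence $\dd\eta$ is only one order worse than $F$ along that curve, and one computation gives $\dd\eta=\bigl(-(2m+2s+1)^2+\frac{2m(2m-1)}{xy}-\frac{2(n-s)(2n+2s+1)}{1-xy}\bigr)F\,\dd x\,\dd y$ directly. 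There are no auxiliary integrals and no elimination, and the square $(2m+2s+1)^2$ does not signal two Stokes steps. Your general-ansatz fallback could find such a form only if the base monomial carries at most $(1-xy)^{s-n}$; the paper's $\eta$ in effect uses $(1-xy)^{s-n-1}$ and $y^{m-1}$. The proposal does not set this up, and carrying it out is precisely the missing proof.

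Finally, cutting only the corner is not enough for Stokes' theorem. The factors $x^{m-1/2}$ (for $m=1$) and $(1-y)^{n-1/2}$ (for $n=0$) are not smooth up to $x=0$ and $y=1$. The paper therefore works on $[\varepsilon/2,1]\times[0,1-\varepsilon/2]$ minus the corner. It then bounds the nonvanishing contributions on the left edge (using $m\ge1$) and on the top edge (using $s+n\ge0$).
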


\begin{proof}
Let
\[
F=
\frac{x^{m-\frac12}y^m(1-x)^n(1-y)^{n-\frac12}}
{(1-xy)^{n-s}}
\]
so that the integral of $F$ over $[0,1]^2$ is $J(s+1,m,n)$. Let $A,B\in\Q(x,y)$ (to be chosen soon) and set
\[
\eta = F\cdot(A\,\dd y -B\,\dd x).
\]
Then
\begin{equation}\label{eq10}
\dd\eta = (F_xA+FA_x+F_y B+FB_y) \, \dd x\, \dd y = (A\frac{F_x}{F}+A_x+B\frac{F_y}{F}+B_y)\, F \,\dd x\,\dd y,
\end{equation}
where the subscripts $x,y$ refer to the partial derivatives. Moreover,
\[
\frac{F_x}{F} = \frac{m-1/2}{x}-\frac{n}{1-x}+\frac{(n-s)y}{1-xy}\,, \qquad \frac{F_y}{F} = \frac{m}{y}-\frac{n-1/2}{1-y}+\frac{(n-s)x}{1-xy}.
\]
The relation \eqref{eq:J1} will be proved if there are $A$ and $B$ such that
\begin{equation}\label{eq8}
\int_{[0,1]^2} \dd\eta =0
\end{equation}
and
\begin{equation}\label{eq7}
\dd\eta = \left(-(2m+2s+1)^2 + \frac{2m(2m-1)}{xy} -\frac{2(n-s)(2n+2s+1)}{1-xy}\right) F \, \dd x\, \dd y,
\end{equation}
as then integrating \eqref{eq7} over $[0,1]^2$ we get the desired formula (see \eqref{eq: basic integral parametrizations 2}). Now set
\[
A = \frac{4(1-x)}{y(1-xy)}(m+(n-m-s)xy), \quad\quad B=\frac{2(1-y)(2m+2n+1-(2m+2s+1)xy)}{1-xy}.
\]
\begin{SaveVerbatim}{sagecheck1}
var('x y m n s')
Fx_over_F = (m-1/2)/x - n/(1-x) + (n-s)*y/(1-x*y)
Fy_over_F = m/y - (n-1/2)/(1-y) + (n-s)*x/(1-x*y)
A = 4*(1-x)*(m + (n-s-m)*x*y)/(y*(1-x*y))
B = 2*(1-y)*(2*m+2*n+1-(2*m+2*s+1)*x*y)/(1-x*y)
LHS = diff(A,x) + A*Fx_over_F + diff(B,y) + B*Fy_over_F
RHS = (2*m*(2*m-1)/(x*y)
       - 2*(n-s)*(2*n+2*s+1)/(1-x*y)
       - (2*m+2*s+1)^2)
print((LHS-RHS).simplify_full())
\end{SaveVerbatim}
The reader can verify that\footnote{For convenience, the reader can use (copy and paste) the SageMath code below to verify this calculation. The code computes the difference of the two sides of the equation (symbolically).
\UseVerbatim[fontsize=\tiny]{sagecheck1}}
\[
    A\frac{F_x}{F}+A_x+B\frac{F_y}{F}+B_y =-(2m+2s+1)^2 + \frac{2m(2m-1)}{xy} -\frac{2(n-s)(2n+2s+1)}{1-xy},
\]
so that \eqref{eq7} indeed holds.
\begin{figure}[ht]
\centering
\begin{tikzpicture}[scale=2.2]
\def\e{0.20}
\fill[gray!15]
  ({\e/2},0)
  -- (1,0)
  -- (1,{1-\e})
  -- ({1-\e/2},{1-\e/2})
  -- ({\e/2},{1-\e/2})
  -- cycle;
\draw[dashed] (0,0) rectangle (1,1);
\draw[dashed, domain=0.75:1.4, samples=100, smooth]
  plot (\x,{1/\x});
\node[right] at (1.4,1/1.4) {$xy=1$};
\draw[thick]
  ({\e/2},0)
  -- (1,0)
  -- (1,{1-\e})
  -- ({1-\e/2},{1-\e/2})
  -- ({\e/2},{1-\e/2})
  -- cycle;
\fill (1,1) circle (0.012);
\node[right] at (1,1) {$R=(1,1)$};
\node at (0.55,0.45) {$S_\varepsilon$};
\node[below left]
  at ({1-\e/16},{1-\e/2})
  {$R_\varepsilon$};
\node[left] at (0,0) {$(0,0)$};
\end{tikzpicture}
\caption{The truncated square $S_\varepsilon$.}
\label{fig:truncated-square}
\end{figure}

As for \eqref{eq8}, choose small $\varepsilon>0$. Let $S_{\varepsilon}$ be the chain obtained from $[\varepsilon/2,1]\times [0,1-\varepsilon/2]$ by cutting the corner near $R=(1,1)$ (see Figure \ref{fig:truncated-square}). The small corner path $R_\varepsilon$ goes from $(1,1-\varepsilon)$ to $(1-\varepsilon/2,1-\varepsilon/2)$ on the line $x+y=2-\varepsilon$. Then $\eta$ is a smooth differential form in a neighbourhood of $S_\varepsilon$ (note that the pole $y$ of $A$ is cancelled by the factor $y^m$ of $F$ because $m\geq 1$), so we can use Stokes' theorem to compute the integral of $\dd \eta$ on $S_{\varepsilon}$. We have
\[
\int_{[0,1]^2} \dd \eta = \lim_{\varepsilon\rightarrow 0}  \int_{S_{\varepsilon}} \dd \eta = \lim_{\varepsilon\rightarrow 0} \int_{\partial S_{\varepsilon}} \eta.
\]
(For the first equality, we already know from \eqref{eq7} that the integral of $\dd \eta$ over $[0,1]^2$ converges, because $m\geq 1$ and $n, n+s\geq 0$.) The integrals of $\eta$ over the bottom horizontal and the right vertical edges of $\partial S_\varepsilon$ are zero as $\eta$ vanishes there (respectively, thanks to $m\geq 1$ and the zero $1-x$ of $A$). We will shows that as $\varepsilon\rightarrow 0$, the integrals of $\eta$ along the other three edges of $\partial S_\varepsilon$ tend to zero. Indeed, on the left vertical edge we have $\eta= FA\dd y$. First using $A\ll (1-x)/(y(1-xy))$ and then $y\ll 1$, $1-x\ll 1$ and $1-xy \asymp 1$ on that edge, on recalling $m\geq 1$ we get $FA \ll x^{m-1/2}(1-y)^{n-1/2}\ll \varepsilon^{m-1/2}(1-y)^{n-1/2}$, so that the integral of $\eta$ over the left vertical edge is
\[
\ll \varepsilon^{m-1/2}\int_0^1 (1-y)^{n-1/2} \dd y \rightarrow 0
\]
as $m\geq 1$ and $n\geq 0$. Moving on to the small corner path $R_\varepsilon$, we have $A,B\ll 1$ and $1-xy\asymp \varepsilon$ there. Setting $u=1-y$ so that $1-x=\varepsilon-u$, we get
\[
\int_{R_\varepsilon} \eta \, \ll \varepsilon^{-(n-s)}\int_{0}^\varepsilon (\varepsilon-u)^nu^{n-1/2}\dd u \ll \varepsilon^{s}\int_0^\varepsilon u^{n-1/2}\dd u=\frac{1}{n+1/2} \varepsilon^{n+s+1/2} \rightarrow 0
\]
as $\varepsilon\rightarrow 0$ because $n+s\geq 0$.

It remains to treat the integral of $\eta$ over the top horizontal edge of $\partial S_\varepsilon$. On this edge, $y=1-\varepsilon/2$, $\eta=-FB\dd x$ and
\[
FB \ll \frac{x^{m-1/2}y^m(1-x)^n(1-y)^{n+1/2}}{(1-xy)^{n-s+1}} \ll \frac{\varepsilon^{n+1/2} x^{m-1/2}(1-x)^n}{(1-xy)^{n-s+1}}.
\]
We have both $1-xy\gg \varepsilon$ and $1-xy\asymp 1-x$ on this edge. Using the former on $(1-xy)^n$ and the latter on $(1-xy)^{-s+1}$ we get $FB \ll \varepsilon^{1/2}x^{m-1/2}(1-x)^{n+s-1}$. Thus the integral of $\eta$ over the top horizontal edge is
\[
\ll \varepsilon^{1/2}\int_{\varepsilon/2}^{1-\varepsilon/2}
x^{m-1/2}(1-x)^{n+s-1}\dd x \ll \varepsilon^{1/2}\log\epsilon \rightarrow 0
\]
as $\varepsilon\rightarrow 0$ because $m,s+n\geq 0$.\footnote{Note that we did not use $m\geq 1$ in our argument for the integral of $\eta$ on the top horizontal edge going to zero. This will be relevant later in the proof of Proposition \ref{prop: base case e=1, m=0}.}
\end{proof}

\begin{prop}\label{prop:J2}
Let $m\geq 1$, $n\geq 0$ and $s+n\geq 0$. Then
\begin{equation}\label{eq:J2}
\begin{split}
2(n+1)(2n+1)J(s+1,m,n+1) = &\ 2m(2m-1)J(s+2,m-1,n) \\
&-(4m+1)(2n+2s+3)J(s+1,m,n)\\
&+(2n+2s+1)(2n+2s+3)J(s,m+1,n).
\end{split}
\end{equation}
\end{prop}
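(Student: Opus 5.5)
We follow the same strategy as in the proof of Proposition \ref{prop:J1}: write every term of \eqref{eq:J2} as the integral over $[0,1]^2$ of a single base integrand $F$ times a rational function, then realize that rational function as $A\frac{F_x}{F}+A_x+B\frac{F_y}{F}+B_y$ for a suitable $\eta=F(A\,\dd y-B\,\dd x)$.

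\emph{Choice of base integrand.} A natural choice is
\[
F=\frac{x^{m-\frac12}y^m(1-x)^n(1-y)^{n-\frac12}}{(1-xy)^{n-s}},
\]
so that $\int F=J(s+1,m,n)$. Relative to $F$, the four integrands of \eqref{eq:J2} correspond to the following multipliers:
\begin{itemize}
\item $J(s+1,m,n+1)$ to $\frac{(1-x)(1-y)}{1-xy}$;
\item $J(s+2,m-1,n)$ to $\frac{1-xy}{xy}$;
\item $J(s+1,m,n)$ to $1$;
\item $J(s,m+1,n)$ to $\frac{xy}{1-xy}$.
\end{itemize}
So we must find $A,B\in\Q(x,y)$ with
\[
A\frac{F_x}{F}+A_x+B\frac{F_y}{F}+B_y = -2(n+1)(2n+1)\frac{(1-x)(1-y)}{1-xy}+2m(2m-1)\frac{1-xy}{xy}-(4m+1)(2n+2s+3)+(2n+2s+1)(2n+2s+3)\frac{xy}{1-xy}.
\]
The logarithmic derivatives $F_x/F$ and $F_y/F$ are exactly those in the proof of Proposition \ref{prop:J1}.

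\emph{Finding $A$ and $B$.} We keep the same shape of ansatz as before. Take $A=\frac{(1-x)}{y(1-xy)}\,p(x,y)$ and $B=\frac{(1-y)}{1-xy}\,q(x,y)$, with $p,q$ polynomials of low degree in $x,y$ (likely affine in $x$, $y$, $xy$). The factor $1-x$ in $A$ and the factor $1-y$ in $B$ are needed for the boundary vanishing. Expanding and matching partial fractions in the poles $x$, $y$, $1-x$, $1-y$, $1-xy$ gives a linear system for the coefficients of $p,q$. One could also look for $\eta$ as a combination of the form used in Proposition \ref{prop:J1} (with shifted parameters) plus a correction. In practice, however, the undetermined-coefficient solve, checked by a SageMath computation as in the footnote there, is what I would do. \textbf{This algebraic search is the main obstacle}: it is not a priori clear that low-degree $p,q$ suffice. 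If they do not, I would allow a factor $x$ in $A$ or enlarge the degree, or write $\eta$ as a sum of two forms with different base powers of $(1-xy)$.

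\emph{Vanishing of the boundary contribution.} Once $A,B$ are found, the rest is as in Proposition \ref{prop:J1}: integrate $\dd\eta$ over the truncated square $S_\varepsilon$ and apply Stokes' theorem. The pieces of $\partial S_\varepsilon$ behave as follows.
\begin{itemize}
\item On the bottom edge, $\eta$ vanishes because $y^m$ with $m\ge1$ cancels the $1/y$ in $A$.
\item On the right edge, $\eta$ vanishes by the factor $1-x$ in $A$.
\item On the left edge, the contribution is $O(\varepsilon^{m-1/2})$.
\item On the corner $R_\varepsilon$, where $1-xy\asymp\varepsilon$, it is $O(\varepsilon^{n+s+1/2})$.
\item On the top edge, the factor $(1-y)^{1/2}$ coming from $B$ gives $O(\varepsilon^{1/2}\log\varepsilon)$.
\end{itemize}
All three nonvanishing contributions tend to $0$ under $m\ge1$, $n\ge0$, $n+s\ge0$. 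Convergence of each of the four $J$-integrals in this range holds by \eqref{eq: parameter conds}, which justifies passing to the limit. Integrating the identity for $\dd\eta$ then yields \eqref{eq:J2}.
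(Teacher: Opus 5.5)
Your setup is the paper's: the same $F$ with $\int F=J(s+1,m,n)$, the same four multipliers, the same $\eta=F(A\,\dd y-B\,\dd x)$, and the same Stokes argument on $S_\varepsilon$. Your boundary estimates hold for any $A=\frac{(1-x)p}{y(1-xy)}$, $B=\frac{(1-y)q}{1-xy}$ with $p,q$ polynomials. The gap is that you never produce $A$ and $B$, and that is the entire content of the proposition, so what you have is a plan rather than a proof.

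Moreover, the ansatz you would try first ($p,q\in\mathrm{span}\{1,x,y,xy\}$) provably has no solution:
\begin{enumerate}[label=(\roman*)]
\item The only pole of $A\frac{F_x}{F}+A_x+B\frac{F_y}{F}+B_y$ along $x=0$ comes from the term $\frac{m-1/2}{x}$ in $F_x/F$. Its residue is $(m-\frac12)\,p(0,y)/y$. Matching the target's $2m(2m-1)/(xy)$ forces $p(0,y)=4m$, so $p$ has no $y$-term.
\item The divergence also contains $(n-s+1)\frac{(1-x)p+x(1-y)q}{(1-xy)^2}$. For $s\neq n+1$, cancelling this double pole forces $p=q$ on $xy=1$, so $q$ has no $y$-term either.
\item With $p,q\in\mathrm{span}\{1,x,xy\}$, $(1-xy)$ times the divergence is a Laurent polynomial all of whose monomials $x^iy^j$ satisfy $i\ge j$. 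But $(1-xy)$ times the target contains the monomial $2(n+1)(2n+1)\,y$, coming from the $J(s+1,m,n+1)$ term.
\end{enumerate}

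The missing idea is to let $p$ contain the monomial $xy^2$, which in turn allows $q$ to contain $y$ while still agreeing with $p$ on $xy=1$. The paper's choice is
\[
A=\frac{2(1-x)}{y(1-xy)}\left(2m-(2m+2s+2)xy-(2n+1)xy^2\right),\qquad
B=\frac{2(1-y)}{1-xy}\left(2m+2n+1-(2n+1)y-(2m+2n+2s+3)xy\right).
\]
For these, a direct (machine-checkable) computation gives exactly your target identity for the divergence. This $A$ and $B$ have the shape your boundary analysis assumes, so with them in hand your Stokes argument on $S_\varepsilon$ completes the proof.
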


\begin{proof}
Take
\begin{align*}
A&=
\frac{2(1-x)}{y(1-xy)}\left((2m-(2m+2s+2)xy-(2n+1)xy^2\right)\\
B&=\frac{2(1-y)}{1-xy}
\left(2m+2n+1-(2n+1)y-(2m+2n+2s+3)xy\right)
\end{align*}
\begin{SaveVerbatim}{sagecheck2}
var('x y m n s')
Fx_over_F = (m-1/2)/x - n/(1-x) + (n-s)*y/(1-x*y)
Fy_over_F = m/y - (n-1/2)/(1-y) + (n-s)*x/(1-x*y)
A = 2*(1-x)/(y*(1-x*y)) *(2*m-(2*m+2*s+2)*x*y-(2*n+1)*x*y^2)
B = 2*(1-y)/(1-x*y) * (2*m+2*n+1-(2*n+1)*y-(2*m+2*n+2*s+3)*x*y)
LHS = diff(A,x) + A*Fx_over_F + diff(B,y) + B*Fy_over_F
RHS = (2*m*(2*m-1)*(1-x*y)/(x*y)-(4*m+1)*(2*n+2*s+3)
+(2*n+2*s+1)*(2*n+2*s+3)*x*y/(1-x*y)-2*(n+1)*(2*n+1)*(1-x)*(1-y)/(1-x*y))
print((LHS-RHS).simplify_full())
\end{SaveVerbatim}
and set $\eta=F(A\,\dd y-B\,\dd x)$, where $F$ is as in the proof of Proposition \ref{prop:J1}. A direct computation gives\footnote{For convenience, here is a SageMath code that verifies this calculation. The code computes the difference of the two sides of the equation.
\par\medskip
\UseVerbatim[fontsize=\tiny]{sagecheck2}}
\[\begin{split}
A\frac{F_x}{F}+A_x+B\frac{F_y}{F}+B_y =& \ 2m(2m-1)\frac{1-xy}{xy}
-(4m+1)(2n+2s+3)\\
&+(2n+2s+1)(2n+2s+3)\frac{xy}{1-xy}\\
&-2(n+1)(2n+1)\frac{(1-x)(1-y)}{1-xy}.
\end{split}
\]
Thus by \eqref{eq10},
\begin{equation}\label{eq11}
\begin{split}
\dd\eta
={}&
\Bigm(2m(2m-1)\frac{1-xy}{xy}
-(4m+1)(2n+2s+3)\\
&+(2n+2s+1)(2n+2s+3)\frac{xy}{1-xy}\\
&-2(n+1)(2n+1)\frac{(1-x)(1-y)}{1-xy}\Bigm) F \, \dd x\, \dd y.
\end{split}
\end{equation}
The four terms on the right hand side (after taking the factor $F$ into account) are scalar multiples of the integrands of
\[
J(s+2,m-1,n),\quad
J(s+1,m,n),\quad
J(s,m+1,n),\quad
J(s+1,m,n+1)
\]
(note that multiplying the integrand of a $J$-integral by
$(1-x)(1-y)/(1-xy)$ simply increases the parameter $n$ by 1). Integrating \eqref{eq11} over $[0,1]^2$ we get \eqref{eq:J2}, as the integral of $d\eta$ is again zero by the same considerations as in the proof of Proposition \ref{prop:J1}.
\end{proof}

We now proceed to specialize the recurrences of this subsection to the cases useful for the proof of Theorem \ref{thm1 reformulated}. It will be convenient to set
\begin{equation}\label{eq J special plane notation}
J_\epsilon(m,n) := J(n-m+1-\epsilon, m,n)\qquad \qquad (\epsilon\in\{0,1\}).     
\end{equation}
Thus the $J_\epsilon(m,n)$ are exactly the $J$-integrals that appear in Theorem \ref{thm1 reformulated}.

\begin{cor}\label{cor: J special plane relations}
Let $1\leq m\leq 2n$.
\begin{enumerate}[label=(\alph*)]
\item We have
\[
\begin{split}
(2n+1)^2J_0(m,n) = & \ 2m(2m-1)J_1(m-1,n)
-2m(4n-2m+1)J_1(m,n).
\end{split}
\]
\item We have
\[
\begin{split}
2(n+1)(2n+1)J_1(m,n+1) = & \ 2m(2m-1)J_0(m-1,n)\\
&-(4m+1)(4n-2m+3)J_0(m,n)\\
&+(4n-2m+1)(4n-2m+3)J_0(m+1,n).
\end{split}
\]
\end{enumerate}
\end{cor}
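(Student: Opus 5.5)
Both parts are direct specializations of Propositions \ref{prop:J1} and \ref{prop:J2}. The plan is to choose $s$ so that every $J$-integral in those recurrences lands on one of the planes $s=n-m+1-\epsilon$, i.e.\ becomes some $J_\epsilon(\cdot,\cdot)$ as defined in \eqref{eq J special plane notation}. The only content is bookkeeping: matching indices, simplifying coefficients, and checking the hypotheses $m\geq 1$, $n\geq 0$, $s+n\geq 0$.

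For (a), I will apply Proposition \ref{prop:J1} with $s=n-m$. The hypotheses hold, since $m\geq 1$, $n\geq 0$ and $s+n=2n-m\geq 0$ by the assumption $m\leq 2n$. The three integrals become:
\begin{itemize}
\item $J(s+1,m,n)=J(n-m+1,m,n)=J_0(m,n)$;
\item $J(s+1,m-1,n)=J(n-(m-1),m-1,n)=J_1(m-1,n)$;
\item $J(s,m,n)=J(n-m,m,n)=J_1(m,n)$.
\end{itemize}
The coefficients simplify as $(2m+2s+1)^2=(2n+1)^2$ and $2(n-s)(2n+2s+1)=2m(4n-2m+1)$. Substituting these into \eqref{eq:J1} gives exactly the identity in (a).

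For (b), I will apply Proposition \ref{prop:J2}, again with $s=n-m$; the hypotheses are verified as in (a). The four integrals become:
\begin{itemize}
\item $J(s+1,m,n+1)=J((n+1)-m,m,n+1)=J_1(m,n+1)$;
\item $J(s+2,m-1,n)=J(n-(m-1)+1,m-1,n)=J_0(m-1,n)$;
\item $J(s+1,m,n)=J_0(m,n)$;
\item $J(s,m+1,n)=J(n-(m+1)+1,m+1,n)=J_0(m+1,n)$.
\end{itemize}
The coefficients simplify as $2n+2s+3=4n-2m+3$ and $2n+2s+1=4n-2m+1$. Substituting these into \eqref{eq:J2} yields (b).

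There is no real obstacle here. The one point to watch is the index matching: each shifted triple $(s',m',n')$ must satisfy $s'=n'-m'+1-\epsilon$ for the intended $\epsilon$. I will also confirm that every integral involved lies in the convergence range $m',n',s'+n'\geq 0$; in all cases this reduces to $m\leq 2n$ or is immediate.
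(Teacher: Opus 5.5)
Your proposal is correct and is exactly the paper's argument: you set $s=n-m$ in Propositions \ref{prop:J1} and \ref{prop:J2}. Your index matching, coefficient simplifications, and the check $s+n=2n-m\geq 0$ are all accurate.
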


\begin{proof}
For part (a) (resp. part (b)), set $s=n-m$ in Proposition \ref{prop:J1} (resp. Proposition \ref{prop:J2}).    
\end{proof}

\begin{rem}
The relations in Corollary \ref{cor: J special plane relations} live entirely within the family of integrals $J(s,m,n)$ with $s=n-m$ or $s=n-m+1$, which are exactly the subfamily of the $J$-integrals appearing in Theorem \ref{thm1 reformulated}. Neither the relation of Proposition \ref{prop:J1} nor the relation of Proposition \ref{prop:J2} lives entirely within this desired subfamily unless $s=n-m$. 
\end{rem}

\begin{rem}
Propositions \ref{prop:J1} and \ref{prop:J2} can also be derived from the two recurrences in the 5-parameter family \eqref{eq6} obtained by integrating $\dd(x(1-x)\Phi \dd y)$ and $\dd(y(1-y)\Phi \dd x)$ over $[0,1]^2$, where
\[
\Phi=\frac{x^{a_1-1/2}y^{a_2}(1-x)^{b_1}(1-y)^{b_2-1/2}}{(1-xy)^{a_3+1}}
\]
is the integrand of \eqref{eq6}. The arguments we gave for the two propositions are a little shorter.
\end{rem}

\subsection{Relation to the $I$-recurrences}
The two relations in Corollary \ref{cor: J special plane relations} are closely related to the relations of Corollary \ref{cor: s=0 I family pole reduction recurrences}. To make this clear, we shall set
\begin{equation}\label{eq: 2-parameter J renormalized}
\Jh_\epsilon(m,n):= A_\epsilon(m,n) \cdot J_\epsilon(m,n) \qquad (\epsilon\in\{0,1\}, \ 0\leq m\leq 2n+1-\epsilon),
\end{equation}
where $J_\epsilon(m,n)$ is as in \eqref{eq J special plane notation} and $A_{\epsilon}(m,n)$ is the rational Gamma-factor
\begin{equation}\label{eq: recall A notation}
A_{\epsilon}(m,n)
=2^{-2n-3}
\frac{\Gamma(n+\frac12)\Gamma(n+\frac32 - \epsilon)}
{\Gamma(2n-m+\frac32-\epsilon)\Gamma(m+\frac12)} \quad\quad\quad (\epsilon=0,1)
\end{equation}
(as earlier defined in \eqref{eq: Gamma factors}). In other words, $\Jh_\epsilon(m,n)$ is exactly the expression on the right hand side of the equation of Theorem \ref{thm1 reformulated}. 

Corollary \ref{cor: J special plane relations} can be reformulated as follows:

\begin{cor}\label{cor: renormalized J relations}
Let $1\leq m\leq 2n$. Then we have
\begin{align}
(2n+1)\Jh_0(m,n)&= 2m\Jh_1(m-1,n) -2m\Jh_1(m,n)\label{eq1 pole red Jh}.\\
\begin{split}(2m+1)\Jh_0(m+1,n)
&= (4m+1)\Jh_0(m,n)-2m\Jh_0(m-1,n)\\
& \ \ \ +8(n+1)\Jh_1(m,n+1).
\end{split}\label{eq2 pole red Jh}
\end{align}
That is, in the region of convergence, the $\Jh_\epsilon(m,n)$ satisfy the exact same recurrences as those for $I_\epsilon(0,m,n)$ given in Corollary \ref{cor: s=0 I family pole reduction recurrences}.
\end{cor}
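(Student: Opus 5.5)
The plan is to take the two relations of Corollary \ref{cor: J special plane relations}, substitute $J_\epsilon(m,n)=\Jh_\epsilon(m,n)/A_\epsilon(m,n)$, multiply through by a suitable Gamma-factor, and check that the resulting coefficients are exactly those in \eqref{eq1 pole red Jh} and \eqref{eq2 pole red Jh}. Everything reduces to computing the ratios of the $A_\epsilon$'s that appear, using only $\Gamma(z+1)=z\Gamma(z)$. The first step is to record these ratios from \eqref{eq: recall A notation}:
\[
\frac{A_1(m,n)}{A_0(m,n)}=\frac{\Gamma(n+\frac12)\,\Gamma(2n-m+\frac32)}{\Gamma(n+\frac32)\,\Gamma(2n-m+\frac12)}=\frac{4n-2m+1}{2n+1},
\qquad
\frac{A_\epsilon(m-1,n)}{A_\epsilon(m,n)}=\frac{2m-1}{4n-2m+3-2\epsilon}.
\]
The second identity holds because lowering $m$ by one raises $\Gamma(2n-m+\frac32-\epsilon)$ by the factor $(2n-m+\frac32-\epsilon)$ and lowers $\Gamma(m+\frac12)$ by the factor $(m-\frac12)$. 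I also need the ratio $A_1(m,n+1)/A_0(m,n)$. It equals $\frac14\cdot\frac{\Gamma(n+\frac32)^2}{\Gamma(n+\frac12)\Gamma(n+\frac32)}\cdot\frac{\Gamma(2n-m+\frac32)}{\Gamma(2n-m+\frac52)}$, which simplifies to $\frac{2n+1}{4(4n-2m+3)}$.

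For (a), I multiply Corollary \ref{cor: J special plane relations}(a) by $A_0(m,n)/(2n+1)$. The left side becomes $(2n+1)\Jh_0(m,n)$. The coefficient of $\Jh_1(m,n)$ is
\[
-2m(4n-2m+1)\cdot\frac{A_0(m,n)}{(2n+1)A_1(m,n)}=-2m,
\]
by the first ratio. The coefficient of $\Jh_1(m-1,n)$ is $2m(2m-1)\,A_0(m,n)/((2n+1)A_1(m-1,n))$. Writing $A_1(m-1,n)=A_1(m,n)\cdot\frac{2m-1}{4n-2m+1}$ (the $\epsilon=1$ case of the second ratio) and then using the first ratio, this coefficient becomes $2m$. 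This gives \eqref{eq1 pole red Jh}.

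For (b), I divide Corollary \ref{cor: J special plane relations}(b) by $(4n-2m+1)(4n-2m+3)$ and multiply by $(2m+1)A_0(m+1,n)$, so that the left side of \eqref{eq2 pole red Jh} appears from the last term. I then check the remaining three coefficients with the ratios above:
\[
A_0(m+1,n)/A_0(m,n)=\frac{4n-2m+1}{2m+1},\qquad A_0(m-1,n)/A_0(m,n)=\frac{2m-1}{4n-2m+3}.
\]
Hence $J_0(m,n)$ contributes $\frac{(4m+1)(4n-2m+3)}{(4n-2m+1)(4n-2m+3)}\cdot(2m+1)\cdot\frac{4n-2m+1}{2m+1}\Jh_0(m,n)=(4m+1)\Jh_0(m,n)$. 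Similarly, $J_0(m-1,n)$ contributes $-2m\Jh_0(m-1,n)$. The term $J_1(m,n+1)$ contributes
\[
\frac{2(n+1)(2n+1)}{4n-2m+3}\cdot\frac{4(4n-2m+3)}{2n+1}\,\Jh_1(m,n+1)=8(n+1)\Jh_1(m,n+1),
\]
with the signs matching after moving this term across. The only real obstacle is bookkeeping. I must apply the second ratio with the correct $\epsilon$ and the correct shift direction, and confirm that every index $(m\pm1,n)$ and $(m,n+1)$ stays inside the range $0\le m\le 2n+1-\epsilon$ where $\Jh_\epsilon$ is defined. For $1\le m\le 2n$ this holds: for instance, $m+1\le 2n+1$ for $\epsilon=0$, and $m\le 2(n+1)$ for $\Jh_1(m,n+1)$.
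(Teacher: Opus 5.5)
Your proposal is correct and is essentially the paper's proof: the paper also multiplies Corollary~\ref{cor: J special plane relations}(a) by $A_0(m,n)/(2n+1)$ and part (b) by $(2m+1)A_0(m+1,n)/\bigl((4n-2m+1)(4n-2m+3)\bigr)$. It relies on the same $\Gamma$-ratio identities, which it records as chains of equal quotients and which you derive from $\Gamma(z+1)=z\Gamma(z)$. Your explicit check that all shifted indices stay in the range $0\le m\le 2n+1-\epsilon$ is a small addition that the paper leaves implicit.
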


\begin{proof}
For the first identity, note that
\[
\frac{A_0(m,n)}{2n+1}=\frac{A_1(m-1,n)}{2m-1}=\frac{A_1(m,n)}{4n-2m+1}.
\]
Now start with the identity of Corollary \ref{cor: J special plane relations}(a), multiply the left hand side by $A_0(m,n)/(2n+1)$, the first expression on the right hand side by $A_1(m-1,n)/(2m-1)$ and the second expression by $A_1(m,n)/(4n-2m+1)$. On recalling the definition of $\Jh_\epsilon(m,n)$ we get exactly \eqref{eq1 pole red Jh}. 

Similarly, the second relation is obtained by multiplying the equation of Corollary \ref{cor: J special plane relations}(b) by
\[
\frac{4A_1(m,n+1)}{2n+1}=\frac{A_0(m-1,n)}{2m-1}=\frac{A_0(m,n)}{4n-2m+3}=\frac{(2m+1)\,A_0(m+1,n)}{(4n-2m+1)(4n-2m+3)}.\qedhere
\]    
\end{proof}

\section{Proof of Theorem \ref{thm1}}\label{sec: thm proof}
For simplicity, let us set
\[
I_\epsilon(m,n) := I_\epsilon(0,m,n) = \int_\D \frac{f^nh^m}{g^{2m+\epsilon}} \dd x \dd y,
\]
where as before, $\D$ is the simplex defined by $x,y,1-x-y\geq 0$ and
\[
    f=x^2y^2 ,\quad\quad g=1-x^2-y^2, \quad\quad h=\prod_{\delta,\delta'\in\{\pm 1\}} (1-\delta x -\delta' y)=g^2-4f.
\]
Also recall that
\[
J_\epsilon(m,n)= J(n-m+1-\epsilon,m,n)=\int_{[0,1]^2} \frac{x^{m-1/2}y^m(1-x)^n(1-y)^{n-1/2}}{(1-xy)^{m+\epsilon}} \dd x\dd y
\]
and
\[
\Jh_\epsilon(m,n) = A_\epsilon(m,n) J_\epsilon(m,n),
\]
where $A_\epsilon(m,n)$ is as in \eqref{eq: recall A notation}. The identity of Theorem \ref{thm1 reformulated} simply reads
\[
I_\epsilon(m,n) = \Jh_\epsilon(m,n) \qquad\quad (\epsilon\in\{0,1\}, \, 0\leq m\leq 2n+1-\epsilon).
\]
We will prove this identity by induction on $m$. The inductive step will essentially be immediate using Corollaries \ref{cor: s=0 I family pole reduction recurrences} and \ref{cor: renormalized J relations}. In what follows, we first establish the result on the required base cases, then move ahead to deduce Theorem \ref{thm1 reformulated}.

\subsection{The case $(\epsilon,m)=(0,0)$}
Here we show that
\[
I_0(0,n) = \Jh_0(0,n)
\]
for all $n\geq 0$. The relevant integrals are
\[
I_0(0,n) = \int_\D x^{2n}y^{2n} \dd x\dd y, \qquad J_0(0,n)=\int_{[0,1]^2}\frac{(1-x)^n(1-y)^n}{\sqrt{x(1-y)}} \dd x \dd y.
\]
Both these integrals are easy to compute. Indeed, writing $x^{2n}y^{2n} \dd x\dd y=\dd (\frac{x^{2n+1}y^{2n}}{2n+1}\dd y)$ and applying Stokes' theorem we see that
\begin{equation}\label{eq12}
I_0(0,n) = \frac{1}{2n+1}\int_0^1 y^{2n}(1-y)^{2n+1}\dd y= \frac{1}{2n+1} B(2n+1,2n+2) = \frac{((2n)!)^2}{(4n+2)!},
\end{equation}
where $B$ is the Beta function (and we used $B(a,b)=\Gamma(a)\Gamma(b)/\Gamma(a+b)$). As for the integral for $J_0(0,n)$, it factors as a product of integrals $\dd x$ and $\dd y$, each of which is a Beta-value:
\begin{equation}\label{eq: J_0(0,n) values}
J_0(0,n) = B(1/2,n+1)\cdot B(1,n+1/2)=\frac{\Gamma(1/2)\Gamma(n+1)}{(n+1/2)\Gamma(n+3/2)}.
\end{equation}
Using this and
\[
A_0(0,n) = 2^{-2n-3}\frac{\Gamma(n+1/2)\Gamma(n+3/2)}{\Gamma(2n+3/2)\Gamma(1/2)},
\]
after simplification and standard manipulations we get $A_0(0,n) J_0(0,n)=((2n)!)^2/(4n+2)!$, which gives us the desired conclusion in view of \eqref{eq12}.

\subsection{The case $(\epsilon,m)=(1,0)$}\label{subsec: nontrivial base case}
In this subsection we will prove the following proposition, which establishes Theorem \ref{thm1 reformulated} in the case where $\epsilon=1$ and $m=0$. 
Before proceeding, for convenience, we note
\[
\begin{split}
I_1(0,n) = \int_\D\frac{f^n}{g}\dd x\dd y,&\qquad J_1(0,n)=\int_{[0,1]^2}\frac{(1-x)^n(1-y)^{n-1/2}}{\sqrt{x} (1-xy)}\dd x\dd y,\\ A_1(0,n)&=2^{-2n-3}\frac{\Gamma(n+1/2)^2}{\Gamma(2n+1/2)\Gamma(1/2)}.    
\end{split}
\]
\medskip\par 
\begin{prop}\label{prop: base case e=1, m=0} ~ 
\begin{enumerate}[label=(\alph*)]
\item The sequences $X(n)=I_1(0,n)$ and $X(n)=\Jh_1(0,n)$ ($n\geq 0)$ satisfy the first order non-homogeneous recurrence
\begin{equation}\label{eq: order 1 recurrence for integral of f^n/g}
8(n+1)X(n+1) = (2n+1) X(n) - \frac{(6n+5)\,(2n)!\,(2n+1)!}{(4n+3)!}.
\end{equation}
\item We have 
\[
I_1(0,n) = \Jh_1(0,n)
\]
for all $n\geq 0$.
\end{enumerate}    
\end{prop}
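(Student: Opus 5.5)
Part (b) follows from part (a) by induction on $n$: the recurrence \eqref{eq: order 1 recurrence for integral of f^n/g} determines $X(n+1)$ from $X(n)$, because the coefficient $8(n+1)$ is nonzero. It is therefore enough to prove (a) for both sequences and then check the single base case $I_1(0,0)=\Jh_1(0,0)$. That base case asks for $\int_\D \dd x\,\dd y/(1-x^2-y^2)$ and compares it with $A_1(0,0)\,J_1(0,0)$, where $A_1(0,0)=\tfrac18$ and
\[
J_1(0,0)=\int_{[0,1]^2}\frac{\dd x\,\dd y}{\sqrt{x(1-y)}(1-xy)}.
\]
Both sides should be computed explicitly; I expect each to equal $G$ up to the factor $1/8$.

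For the $I$-side, I would write down an exact form whose differential produces the three terms of the recurrence. The first candidate is the pole reduction mechanism of Proposition \ref{prop: pole reduction I}, pushed to $m=0$. With $\eta=f^{n}g^{0}(x\dd y-y\dd x)$-type forms, one has $\dd(f^b/g^c\,(x\dd y-y\dd x))=(2c f^b/g^{c+1}+2(2b-c+1)f^b/g^c)\,\dd x\,\dd y$, so with $c=0$ this only yields polynomial integrals. The link between $f^{n+1}/g$ and $f^n/g$ instead has to come from $4f=g^2-h$, which gives
\[
4I_1(0,n+1)=\int_\D f^n g\,\dd x\,\dd y-\int_\D f^n h/g\,\dd x\,\dd y .
\]
I would then use \eqref{eq1 pole red I} with $s=0$, $m=1$, even though $m=1$ there requires the boundary term on $1-x-y=0$ to vanish, which it does because $h$ vanishes there. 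This expresses $\int f^nh/g^2$ through $I_0(0,n)$, $I_1(0,0,n)$ and $I_1(0,1,n)$. The boundary term on the hypotenuse is the new difficulty: a form like $f^{n}(x\dd y-y\dd x)/g^{c}$ with no factor of $h$ does not vanish on $x+y=1$. That boundary contribution is exactly a one-variable Beta integral, $\int_0^1 (y(1-y))^{2n}/(2y(1-y))^{c}\,\dd y$, since $g=2xy$ on the hypotenuse. I expect it to produce the inhomogeneous term $(6n+5)(2n)!(2n+1)!/(4n+3)!$, together with the polynomial integrals $I_0(0,n)$ from \eqref{eq12}. Concretely, I would apply Stokes to $\eta=f^{n+1}g^{-1}\cdot(\text{suitable } A\,\dd y-B\,\dd x)$, for instance with $x\dd y-y\dd x$ and $c=1$, which gives $2f^{n+1}/g^2+2(2n+2)f^{n+1}/g$. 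I would combine this with $4f/g^2=(g^2-h)/g^2$-type substitutions until only $f^{n+1}/g$, $f^n/g$ and polynomial or boundary terms remain. I would fix the coefficients by matching against the claimed $8(n+1)$ and $(2n+1)$.

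For the $J$-side, I would use Proposition \ref{prop:J1}. The footnote there already flags that the top-edge estimate did not use $m\ge1$, so the proof of Proposition \ref{prop:J1} can be rerun at $m=0$. The term $2m(2m-1)J(s+1,m-1,n)$ then drops out. What survives is a nonzero boundary contribution along the left edge $x=0$, since $A$ has a factor $1/y$ and $F$ lacks $y^m$, so the left-edge integral no longer vanishes. I would compute that edge integral explicitly as a Beta value in $y$. This gives a recurrence relating $J(s+1,0,n)$ and $J(s,0,n)$. I would then combine it with a second relation, from Proposition \ref{prop:J2} at $m=0$ with its own boundary term, or from the Pascal relation \eqref{eq J Pascal}, to pass from $J_1(0,n)=J(n,0,n)$ to $J_1(0,n+1)=J(n+1,0,n+1)$. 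Multiplying through by $A_1(0,n)$ and $A_1(0,n+1)$, with ratio $A_1(0,n+1)/A_1(0,n)=\tfrac14(2n+1)^2/((4n+1)(4n+3))$, should turn it into \eqref{eq: order 1 recurrence for integral of f^n/g}.

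The main obstacle is the $J$-side. Shifting $n$ at $m=0$ needs the $n$-raising relation Proposition \ref{prop:J2}, which involves $J(s,1,n)$ and so leaves the $m=0$ plane. Eliminating that term will need a further relation, presumably \eqref{eq J Pascal} or Proposition \ref{prop:J1} with $m=1$. Verifying that the Gamma-factor bookkeeping and the boundary Beta values collapse exactly to $(6n+5)(2n)!(2n+1)!/(4n+3)!$ is the delicate computation.
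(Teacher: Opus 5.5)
The gap is on the $I$-side. Your first relation is correct: the form $\frac{f^{n+1}}{g}(x\,\dd y-y\,\dd x)$ together with the hypotenuse term gives $I_0(1,0,n)+8(n+1)I_1(0,0,n+1)=B(2n+2,2n+2)$, which is the paper's first relation. The second relation, however, cannot come from the tools you list. The only exact forms you actually write down are of Euler type. Since $\bE f=4f$ and $\bE g=2g-2$, the identity
$\dd\bigl(\frac{f^b}{g^c}(x\,\dd y-y\,\dd x)\bigr)=\bigl(2c\frac{f^b}{g^{c+1}}+2(2b-c+1)\frac{f^b}{g^c}\bigr)\dd x\,\dd y$
preserves the exponent $b$ of $f$. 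Substituting $h=g^2-4f$ is only a rewriting, so every identity produced this way lives at a single power of $f$. None of them can connect $\int_\D f^{n+1}/g$ with $\int_\D f^n/g$.

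Your concrete route through \eqref{eq1 pole red I} at $s=0$, $m=1$ is circular. We have $I_1(0,1,n)=\int_\D \frac{f^n}{g}-4\int_\D\frac{f^{n+1}}{g^3}$ and $I_0(0,1,n)=\int_\D f^n-4\int_\D\frac{f^{n+1}}{g^2}$. Substituting these, the terms $\int_\D f^n/g$ cancel, and the relation collapses to the Euler identity for $f^{n+1}/g^2$. Matching coefficients afterwards cannot supply the missing equation.

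What is needed is an exact form that is not of Euler type. The paper takes $\eta=\frac{f^n}{g}(A\,\dd y-B\,\dd x)$ with $A=\tfrac12x(1-x^2+y^2)$ and $B=\tfrac12y(1+x^2-y^2)$. These satisfy $A_x+B_y=g$, $Af_x+Bf_y=2f$ (not $4f$), and $Ag_x+Bg_y=-g(1-g)-4f$. Hence $\dd\eta=\bigl((2n+1)\frac{f^n}{g}+4\frac{f^{n+1}}{g^2}\bigr)\dd x\,\dd y$. Since $A+B=g$ on the hypotenuse, Stokes gives $(2n+1)I_1(0,0,n)+I_0(1,0,n)=B(2n+1,2n+1)$. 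Subtracting this from your first relation gives the recurrence, because $B(2n+1,2n+1)-B(2n+2,2n+2)=\frac{(6n+5)(2n)!(2n+1)!}{(4n+3)!}$.

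Your $J$-side plan is essentially the paper's. You use Proposition~\ref{prop:J2} at $m=0$, $s=n$, then the Pascal relation $J(n,1,n)=J_1(0,n)-J_0(0,n)$, and then the explicit Beta value of $J_0(0,n)$. One correction: at $m=0$ the factor $1/y$ in $A$ cancels, in both Propositions~\ref{prop:J1} and~\ref{prop:J2}. So the left edge contributes nothing, and the boundary term comes from the bottom edge $y=0$, where $\eta=-(4n+2)x^{-1/2}(1-x)^n\,\dd x$.

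Finally, part (b) needs the actual values $I_1(0,0)=G$ and $J_1(0,0)=8G$, so that $A_1(0,0)J_1(0,0)=G$. You only state these as an expectation.
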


\begin{proof}
We first note that part (b) is immediate from part (a) and the initial values
\[
I_1(0,0)=\int_\D \frac{\dd x \dd y}{g} = G,\quad J_1(0,0)=\int_{[0,1]^2}\frac{x^{-1/2}(1-y)^{-1/2}}{1-xy} \dd x\dd y = 8G,\quad
A_1(0,0)=1/8.
\]
(For the integral $J_1(0,0)$, see \cite[\S 9]{RZ}, for instance\footnote{To avoid any chance of confusion, we note that there is a factor 2 misprint in this calculation in a preprint version of \cite{RZ} that is readily available online. The misprint is corrected in the published version.}. For a reference for the integral $I_1(0,0)$, see \cite{JL}, equation (20), where a variant of this integral is evaluated. The integral $I_1(0,0)$ is easily derived from the one there by a change of variables.)
\medskip\par 
We now need to prove part (a). We first focus on the sequence $I_1(0,n)=I_1(0,0,n)$. Equation \eqref{equality of diff forms for pole reduction recursion I family} is valid as an equality of meromorphic differential forms for all integers $s,m,n$. Setting $m=0$, $s=1$, and $\epsilon=1$ it reads
\[
\frac12\,\dd\eta
={}4\frac{f^{n+1}}{g^{2}}\,\dd x\,\dd y \ +8(n+1)\frac{f^{n+1}}{g}\,\dd x\,\dd y,
\]
where
\[
\eta = 4\frac{f^{n+1}}{g}(x\dd y-y \dd x).
\]
Integrating over $\D$ we get the non-homogeneous relation
\begin{equation}\label{eq13}
I_0(1,0,n)+8(n+1) I_1(0,0,n+1)=\frac{1}{2}\int_\D \dd\eta  \stackrel{(\ast)}{=} B(2n+2,2n+2).
\end{equation}
(Compare with the second relation of Proposition \ref{prop: pole reduction I}. For $(\ast)$, use Stokes' theorem on the chain $\D_\varepsilon$ as in the proof of Proposition \ref{prop: pole reduction I}. The integrals of $\eta$ on the horizontal and vertical parts of $\partial \D_\varepsilon$ are clearly zero, and the integrals over the small corner paths are easily seen to go to zero as $\varepsilon\rightarrow 0$. The integral of $\eta$ on the straight-line path $PQ$ from $P=(1,0)$ to $Q=(0,1)$ gives the right hand side of $(\ast)$, as $g=2y(1-y)$ and $x\dd y-y\dd x=\dd y$ on $PQ$.)

To show that the sequence $I_1(0,0,n)$ satisfies the recursion \eqref{eq: order 1 recurrence for integral of f^n/g}, we need another relation to eliminate $I_0(1,0,n)$ from \eqref{eq13}. Set
\[
F=\frac{f^n}{g},\quad A=\frac{x(1-x^2+y^2)}{2},\quad B=\frac{y(1+x^2-y^2)}{2}, \quad \eta=F(A\dd y-B\dd x),
\]
(where we have recycled the symbol $\eta$). The reader can verify using
\[
A_x+B_y=g, \quad \frac{F_x}{F}=\frac{\dd}{\dd x}\log(f^n/g)=\frac{2n}{x}+\frac{2x}{g},\quad \frac{F_y}{F}=\frac{2n}{y}+\frac{2y}{g}
\]
and \eqref{eq10} that
\[
d\eta = (2n+1+\frac{4x^2y^2}{g})\, F \,\dd x\,\dd y = (2n+1)\frac{f^n}{g} \dd x\,\dd y+4\frac{f^{n+1}}{g^2}\dd x \,\dd y.
\]
Integrating over $\D$, by similar considerations as before we get
\[
(2n+1)I_1(0,0,n)+I_0(1,0,n)= \int_\D \dd \eta= \int_{PQ}\eta =B(2n+1,2n+1).
\]
(Note that $g=A+B$ on $PQ$.) Subtracting this from \eqref{eq13} after simplification we obtain \eqref{eq: order 1 recurrence for integral of f^n/g} with $X(n)=I_1(0,0,n)$, as desired.

We now turn our attention to the sequence $\Jh_1(0,n)$. With $\eta, A, B$ as in the proof of Proposition \ref{prop:J2}, equation \eqref{eq11} holds for all $s,m,n\in\Z$. Setting $m=0$ and $s=n$, it reads
\begin{equation}\label{eq14}
\begin{split}
\dd\eta
={}&
\Bigm(
-(4n+3)+(4n+1)(4n+3)\frac{xy}{1-xy}\\
&-2(n+1)(2n+1)\frac{(1-x)(1-y)}{1-xy}\Bigm) F \, \dd x\, \dd y,
\end{split}
\end{equation}
where
\[\begin{split}
F = & \ x^{-1/2}(1-x)^n(1-y)^{n-1/2}, \qquad\qquad \eta= F\left( A\dd y-B\dd x\right)\\
A=&\frac{-2x(1-x)}{1-xy}\bigm(\!(2n+2)+(2n+1)y\!\bigm)\\
B=&\frac{2(1-y)}{1-xy}\bigm(\!(2n+1)(1-y)-(4n+3)xy\!\bigm)\!.
\end{split}
\]
We integrate \eqref{eq14} over $[0,1]^2$. As in \S \ref{subsec: J pole reduction relations}, we may apply Stokes' theorem to the integral of $\dd \eta$ over the region $S_\varepsilon$ of Figure \ref{fig:truncated-square} (see the proof of Proposition \ref{prop:J1} to recall the description of $S_\varepsilon$). The differential form $\eta$ vanishes on the right vertical edge of $\partial S_\varepsilon$, and similar considerations to the ones in the proof of Proposition \ref{prop:J1} show that the integral of $\eta$ over the left vertical edge, the small corner path, and the top horizontal edge in $\partial S_\varepsilon$ all tend to zero as $\varepsilon\rightarrow 0$. (Comparing with the situation of Propositions \ref{prop:J1} and \ref{prop:J2} where $m\geq 1$, here on the vertical edges $A$ comes to our rescue. As for the corner edge and the top edge, the estimates in Proposition \ref{prop:J1} did not use the assumption $m\geq 1$.) On the other hand, $\eta$ now no longer vanishes on the $x$-axis; its integral over the bottom horizontal edge of $\partial S_\varepsilon$ tends to
\[
-(4n+2)\int_0^1x^{-1/2}(1-x)^n \dd x=-(4n+2) \, B(1/2, n+1).
\]
(Note that $\eta= -BF\dd x = -(4n+2) x^{-1/2}(1-x)^n \dd x$ on the $x$-axis.)
Thus integrating \eqref{eq14} over $[0,1]^2$, on recalling \eqref{eq: basic integral parametrizations 2} we get
\begin{equation}\label{eq16}
\begin{split}
&-(4n+3) J(n+1,0,n)
+(4n+1)(4n+3)J(n,1,n)\\
&-2(n+1)(2n+1)J(n+1,0,n+1) = -(4n+2) B(1/2,n+1).
\end{split}
\end{equation}
Using the Pascal relation \eqref{eq J Pascal} to replace $J(n,1,n)$ by $J(n,0,n)-J(n+1,0,n)$, in view of \eqref{eq: J_0(0,n) values} (and on recalling the definition of $J_\epsilon(m,n)$ from \eqref{eq J special plane notation}), we get
\begin{equation}\label{eq15}
\begin{split}
(4n+1)(4n+3)J_1(0,n) &- 2(n+1)(2n+1)J_1(0,n+1)= \\
&(4n+2)B(\frac{1}{2},n+1) \left(\!(4n+3)B(1,n+\frac{1}{2})-1\!\right)=\frac{2(6n+5)\Gamma(\frac{1}{2}) \, n!}{\Gamma(n+\frac32)}.
\end{split}
\end{equation}
Straightforward manipulations using
\[
A_1(0,n)=2^{-2n-3}\frac{\Gamma(n+1/2)^2}{\Gamma(2n+1/2)\Gamma(1/2)}=\frac{4(4n+1)(4n+3)}{(2n+1)^2}A_1(0,n+1)
\]
show that this amounts to \eqref{eq: order 1 recurrence for integral of f^n/g} with $X(n)=\Jh_1(0,n)$.
\end{proof}

\subsection{The case $m=1$}
The goal of this subsection is to establish Theorem \ref{thm1 reformulated} in the case when $m=1$, that is to show that
\begin{equation}\label{eq: thm m=1 case}
I_\epsilon(1,n) = \Jh_\epsilon(1,n) \qquad\qquad (\epsilon\in\{0,1\}, \  n\geq \epsilon).
\end{equation}
(The condition $n\geq \epsilon$ comes from the convergence condition $m\leq 2n+1-\epsilon$.)

We can deduce \eqref{eq: thm m=1 case} from the case $m=0$ of Theorem \ref{thm1 reformulated} using our earlier recursions, as follows. Both $X_\epsilon(m,n)=I_\epsilon(m,n)$ and $X_\epsilon(m,n)=\Jh_\epsilon(m,n)$ satisfy the relation
\begin{equation}\label{eq17}
X_0(1,n) = X_0(0,n)+8(n+1)X_1(0,n+1)- B(2n+2,2n+2).
\end{equation}
Taking this for granted for a moment, since we already know that $I_\epsilon(0,n)=\Jh_\epsilon(0,n)$ for all $n\geq 0$ and both $\epsilon=0,1$, it follows that $I_0(1,n)=\Jh_0(1,n)$. As for the relation \eqref{eq17}, if $X_\epsilon(m,n)=I_\epsilon(m,n)$ this follows from \eqref{eq13} upon using the Pascal relation to replace $I_0(1,0,n)$ by $I_0(0,0,n)-I_0(0,1,n)$ (recall that $I_\epsilon(m,n)$ means $I_\epsilon(0,m,n)$). As for $X_\epsilon(m,n)=\Jh_\epsilon(m,n)$, one can see by straightforward manipulations that
\[
\frac{A_0(0,n)}{4n+3}=\frac{4A_1(0,n+1)}{2n+1}=\frac{A_0(1,n)}{(4n+1)(4n+3)}=\frac{B(2n+2,2n+2)}{(4n+2)B(1/2,n+1)}
\]
(see \eqref{eq: recall A notation} to recall the definition of $A_\epsilon(m,n)$). Thus \eqref{eq17} is exactly what \eqref{eq16} gives in view of the normalization $\Jh_\epsilon(m,n)=A_\epsilon(m,n)J_\epsilon(m,n)$ (see also \eqref{eq J special plane notation}).

It remains to establish the case $\epsilon=1$ of \eqref{eq: thm m=1 case}. Setting $m=1$ in the first relations of Corollaries \ref{cor: renormalized J relations} and \ref{cor: s=0 I family pole reduction recurrences}, both $X_\epsilon(m,n)=I_\epsilon(m,n)$ and $X_\epsilon(m,n)=\Jh_\epsilon(m,n)$ satisfy the relation
\[
(2n+1)X_0(1,n) = 2 X_1(0,n) - 2 X_1(1,n)
\]
for all $n\geq 1$. Thus the case $\epsilon=1$ of \eqref{eq: thm m=1 case} follows from the already established cases $m=0$ and $(\epsilon,m)=(0,1)$.

\subsection{Finishing the proof of Theorem \ref{thm1 reformulated}}
We are ready to finish the proof of Theorem \ref{thm1 reformulated}. The goal is to show that for every $m\geq 0$,
\[
I_\epsilon(m,n) = \Jh_\epsilon(m,n) \quad \text{for all $\epsilon\in\{0,1\}$ and $n$ with $m\leq 2n+1-\epsilon$}.
\]
The cases $m=0,1$ have already been established. Arguing by induction on $m$, suppose that the desired conclusion for both $\epsilon=0,1$ holds for all $m\leq m'$ for some $m'\geq 1$. We first prove the statement for $m=m'+1$ and $\epsilon=0$. Fix $n$ with $m'\leq 2n$, so that $n$ is in the range where the theorem applies for $m=m'+1$, $\epsilon=0$. First use the second identities of Corollaries \ref{cor: s=0 I family pole reduction recurrences} and \ref{cor: renormalized J relations} (applied with the choice $m=m'$, referring to the notation of said corollaries): for $X\in\{ I,\Jh\}$, we have
\[
(2m'+1)X_0(m'+1,n)
= (4m'+1)X_0(m',n)-2m' X_0(m'-1,n)+8(n+1)X_1(m',n+1).
\]
The right hand sides coincide for $X=I$ and $X=\Jh$, so that $I_0(m'+1,n)=\Jh_0(m'+1,n)$. Thus the case $\epsilon=0$, $m=m'+1$ of the assertion holds.

Next, we show that the case $\epsilon=1$, $m=m'+1$ also holds. Now fix $n$ with $m'\leq 2n-1$. We apply the first recurrences of Corollaries \ref{cor: s=0 I family pole reduction recurrences} and \ref{cor: renormalized J relations} (applied with the choice $m=m'+1$). For $X\in\{ I,\Jh\}$, we have
\[
(2n+1)X_0(m'+1,n)= (2m'+2) X_1(m',n) -(2m'+2) X_1(m'+1,n).
\]
We already know $I_0(m'+1,n)=\Jh_0(m'+1,n)$ and $I_1(m',n)=\Jh_1(m',n)$, so that we get $I_1(m'+1,n)=\Jh_1(m'+1,n)$, finishing the proof of Theorem \ref{thm1 reformulated}.

\section*{Acknowledgments}
This work is a continuation of the joint work \cite{EMN} with Kumar Murty and Yusuke Nemoto. I would like to express my utmost gratitude to Murty and Nemoto for very helpful discussions over the course of this work.

\end{document}